\theoremstyle{plain}
\newtheorem{thm}{Theorem}[section]
\newtheorem*{thm*}{Theorem}
\newtheorem{lemma}{Lemma}[section]
\newtheorem{definition}{Definition}[section]
\newtheorem{prop}{Proposition}
\newtheorem{cor}{Corollary}[section]
\theoremstyle{remark}
\newcommand{\cA}{\mathcal{A}}
\newcommand{\cE}{\mathcal{E}}
\newcommand{\cP}{\mathcal{P}}
\newcommand{\C}{\mathbb{C}}
\newcommand{\R}{\mathbb{R}}
\newcommand{\MF}{\mathcal{MF}}
\newcommand{\Q}{\mathbb{Q}}
\newcommand{\Z}{\mathbb{Z}}
\newcommand{\PMF}{\mathcal{PMF}}
\newcommand{\M}{\mathcal{M}}
\newcommand{\bz}{\boldsymbol{z}}
\newcommand{\HH}{{\mathbb H}}
\DeclareMathOperator{\diag}{diag}
\DeclareMathOperator{\diam}{diam}
\DeclareMathOperator{\SL}{SL}
\newcommand{\vhi}{\varphi}
\newcommand{\eps}{\varepsilon}
\title[Winning games for bounded geodesics]{Winning games for bounded geodesics in moduli spaces of quadratic differentials}
\begin{document}
\author{Jonathan Chaika}
\author{Yitwah Cheung}
\author{Howard Masur}
\subjclass[2010]{primary 30F30; secondary: 32G15.}
\keywords{Schmidt games, saddle connections, bounded trajectories}
\thanks{JC: Partially supported by NSF grant DMS 1004372.}
\thanks{YC: Partially supported by NSF grant DMS 0956209.}
\thanks{HM: Partially supported by NSF grant DMS 0905907.}
\address{Mathematics Department, 
155 S 1400 E Room 233, 
University of Utah, 
Salt Lake City, UT 84112-0090, USA}
\email{chaika@math.utah.edu}
\address{Mathematics Department,
San Francisco State University,
1600 Holloway Avenue,
San Francisco, CA 94132, USA}
\email{ycheung@sfsu.edu}
\address{Department of Mathematics,
University of Chicago,
5734 S. University,
Chicago, IL 60637, USA}
\email{masur@math.uchicago.edu}

\begin{abstract}
We prove that the set of bounded geodesics in Teichm\"{u}ller space are a winning set 
for Schmidt's game. This is a notion of largeness in a metric space that can apply 
to measure $0$ and meager sets. We prove analogous closely related results on 
any Riemann surface, in any stratum of quadratic differentials, on any Teichm\"{u}ller 
disc and for intervals exchanges with any fixed irreducible permutation.
\end{abstract}

\maketitle

\section{Introduction}
In the 1966 paper \cite{Sgame} W.~Schmidt introduced a game, now called a 
Schmidt game, to be played by two players in $\R^n$.  He showed that winning 
sets for his game are large in the sense that they have full Hausdorff dimension, 
and that the set of badly approximable vectors in $\R^n$, which were known to 
have measure zero, is a winning set for this game.  Schmidt's game and a 
modified version of it were used in \cite{Da86} and \cite{KW10} to establish that 
the set of bounded trajectories of nonquasiunipotent flow on a finite volume 
homogeneous space has full Hausdorff dimension, a result first established 
in \cite{KM96} using different methods.  The dynamical significance of badly 
approximable vectors is well-understood: in terms of the flow on the moduli 
space of $(n+1)$-dimensional tori $\SL(n+1,\R)/\SL(n,\Z)$ induced by the 
left action of the one-parameter subgroup $g_t=\diag(e^t,\dots,e^t,e^{-nt})$, 
a vector $\mathbf{x}\in\R^n$ is badly approximable if and only if it determines 
a bounded trajectory via $g_tU(\mathbf{x})\SL(n+1,\Z)$ where $U(\mathbf{x})$ 
is the unipotent matrix whose $(i,j)$-entry is $1$ if $i=j$, $-x_i$ if $i\le n$ and 
$j=n+1$, and $0$ otherwise.  

This paper is concerned with higher genus analogues of the same circle of ideas.  
Let $\PMF$ be Thurston's sphere of projective measured foliations on a closed 
surface of genus $g>1$.  
Let $D\subset \PMF$ consist of those foliations $F$ such that for some (hence all) 
quadratic differentials $q$ whose vertical foliation is $F$, the Teichm\"uller geodesic 
defined by $q$ stays in a compact set in $\M_g$, the moduli space of genus $g$.  
Following \cite{McM2}, we use the terminology {\em Diophantine} to describe the 
foliations that lie in the set $D$.  It was shown in \cite{McM2} that a foliation $F$ is 
Diophantine if and only if $$\inf_{\beta} |\beta|i(F,\beta)>0.$$ 
Here, the infimum is over all homotopy classes of simple closed curves $\beta$, 
$i(F,\beta)$ is the standard intersection number, and for $|\beta|$ one can take it 
to be the length of the geodesic in the homotopy class with respect to some fixed 
hyperbolic structure on the surface.  The notion of Diophantine does not depend 
on which hyperbolic metric is chosen.  Alternatively, we may fix a triangulation of 
the surface and take the number $|\beta|$ to be the minimum number of edges 
traversed by any curve in the homotopy class.  
In the moduli space of genus one, a.k.a. the modular surface, Teichm\"uller 
geodesic rays are represented by arcs of circles or vertical lines in the upper 
half plane with endpoint in $\R\cup\{\infty\}$.  The notion of Diophantine extends 
the property of a geodesic ray that its endpoint is a badly approximable real number.  

In \cite{McM} McMullen introduced two variants of the Schmidt game, giving 
rise to the notions of \emph{strong winning} and \emph{absolute winning} sets.  
(We recall their definitions in the next section.)  It is not hard to show that 
absolute winning implies strong winning while strong winning implies 
\emph{Schmidt winning}, i.e. winning in the original sense of Schmidt.  
In particular, they also have full Hausdorff dimension.  
McMullen raises the question in \cite{McM} as to whether the set of Diophantine 
foliations $D\subset\PMF$ is a strong winning set.  
In this paper, we give an affirmative answer to this question.  

\begin{thm}\label{thm:pmf}
The set $D\subset\PMF$ of Diophantine foliations is a strong winning set, 
hence a winning set for the Schmidt game.  However, it is not absolute winning.  
\end{thm}

We remark that the Schmidt game requires a metric on the space, 
but the notion of winning is invariant under bi-Lipschitz equivalence.  
In the case of $\PMF$ there are various bi-Lipschitz equivalent ways of 
defining a metric. The most familiar is to use train track coordinates.  
(See \cite{PH} for a discussion of train tracks).  A fixed train track 
defines a local metric by pull-back of the Euclidean metric.  
A finite collection of train tracks can be used to parametrize all of $\PMF$.  
One can then defined a path metric on $\PMF$ via a finite number of 
locally defined metrics.   

The next theorem concerns quadratic differentials (see Section \ref{S:qd} for the definition of quadratic differentials)
 that determine 
bounded geodesics in a stratum. 
\begin{thm}\label{thm:fullspace}
Let $Q^1(k_1,\ldots, k_n,\pm)$ be any stratum of unit norm quadratic differentials.  
Let $U$ be an open set with compact closure $\bar U \subset Q^1(k_1,\ldots, k_n,\pm)$ with a metric given by the pull-back of the Euclidean metric under a local coordinate 
system given by the holonomy coordinates of saddle connections.   Then there exists 
an $\alpha>0$ depending on the smallest systole in $U$ such that the subset 
$E_Q\subset \bar U$ consisting of those quadratic differentials $q$ such that 
the Teichm\"uller geodesic defined by $q$ stays in a compact set in the stratum 
is an $\alpha$-strong winning, hence winning for the Schmidt game.  
It is not absolute winning.  
\end{thm}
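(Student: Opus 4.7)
Theorem~\ref{thm:fullspace} is the stratum-level analogue of Theorem~\ref{thm:pmf}, and the plan is to play the Schmidt game directly in a period-coordinate chart on $\bar U$, scale by scale, using the Teichm\"uller flow. The first step is to replace the dynamical condition defining $E$ by a saddle-connection criterion: $q\in E$ iff there exists $\eta>0$ with $\mathrm{sys}(g_tq)\ge \eta$ for all $t\ge 0$, where $\mathrm{sys}$ denotes the length of the shortest saddle connection and $g_t$ is the Teichm\"uller flow. This is the standard criterion for compactness in the stratum, so $E$ is an increasing union of compact sets $E_\eta$. For the $\alpha$-strong game, one uses that local period coordinates endow $\bar U$ with a Euclidean structure on which $g_t$ acts by a hyperbolic linear map. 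Given Bob's ball $B_n$ of radius $r_n$, set the corresponding time $T_n=\log(1/r_n)$. For a threshold $\eta_0$, the set of $q\in B_n$ for which some saddle connection $\gamma$ has length less than $\eta_0$ on $g_{T_n}q$ is contained in a tube around the real-codimension-two subvariety in which $\gamma$ vanishes, and only the $\gamma$ whose length on a fixed reference point of $B_n$ is at most $C e^{T_n}\eta_0$ are relevant. Alice's strategy is to choose $A_n\subset B_n$ of radius $\alpha r_n$ disjoint from the union of these tubes; iterating, the intersection point $q$ satisfies $\mathrm{sys}(g_tq)\ge \eta_0$ for all $t\ge 0$, so $q\in E_{\eta_0}\subset E$.

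The principal obstacle is controlling the total size of the union of tubes so that such an $A_n$ can always be found. Two ingredients are needed: first, the Eskin--Masur quadratic counting estimate for saddle connections of bounded length on a flat surface with a uniform lower bound on the systole, giving a bound on the number of relevant $\gamma$ after normalization by $e^{T_n}\eta_0$; and second, the linearity of the vanishing equation in period coordinates, so each tube is thin transverse to its defining subvariety. Combining them, the union of tubes occupies a fraction of $B_n$ controlled by $\eta_0$ and the geometry of $\bar U$. Choosing $\eta_0$ small enough after fixing $\alpha$, with the dependence on the systole lower bound in $\bar U$ that the theorem asserts, one arranges that Alice always finds a clean $\alpha$-sub-ball. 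Making this fully quantitative and independent of the history of play is the substantive step; it is essentially the main work of the proof, and the corresponding argument for $\PMF$ in Theorem~\ref{thm:pmf} adapts with the train-track charts replaced by period coordinates.

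For the failure of absolute winning, I would construct an explicit strategy for Bob in the absolute game that forces the limiting point outside $E$. The codimension-two subvarieties on which a given saddle connection vanishes meet $B_n$ in many affine slices, and in the absolute game Alice removes only a single ball of radius at most $\beta r_n$. Hence Bob can always pick a descendant ball very close to such a slice for a freshly chosen time scale. Iterating with a sequence of saddle connections at increasing scales, Bob produces a nested sequence whose intersection $q$ satisfies $\mathrm{sys}(g_{T_k}q)\to 0$ for some $T_k\to\infty$, so $q\notin E$. This is parallel to the classical argument, discussed in \cite{McM}, that the set of badly approximable numbers is not absolute winning.
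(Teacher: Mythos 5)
Your reduction of membership in $E$ to a uniform lower bound on $\mathrm{sys}(g_tq)$ is correct, and checking only the times $T_n=\log(1/r_n)$ is harmless since consecutive radii differ by a bounded factor. The genuine gap is in the core step: the claim that at each stage the union of ``tubes'' (really slabs of the form $\{|h(\gamma)|\le \eta_0 e^{-T_n}\}$ around the codimension-one loci where $\gamma$ is vertical, not tubes around the codimension-two locus where $\gamma$ vanishes, which does not meet the stratum at all) occupies a fraction of $B_n$ controlled by $\eta_0$, so that Alice can place one ball of relative radius $\alpha$ avoiding all of them. The number of saddle connections that can be short at time $T_n$ is of order $(\eta_0 e^{T_n})^2=\eta_0^2 r_n^{-2}$, which is unbounded as the game proceeds, and the natural estimate for the total relative thickness of the corresponding slabs diverges (roughly like $\eta_0^2/r_n$), so the proposed counting-plus-thinness union bound does not close; moreover, even a small-measure union of unboundedly many thin slabs need not leave room for a single ball of \emph{fixed} relative radius $\alpha$ — for that one needs a bound on the \emph{number} of objects to be dodged at each scale, with controlled geometry. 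This is exactly the difficulty the paper isolates: unlike the torus case, directions of short saddle connections are not separated, and the actual proof replaces your one-shot avoidance by an inductive, level-by-level blocking of \emph{shrinkable complexes} of saddle connections, where a complex $K$ is blocked only at the scale where $L(K)L(\partial K)|I_j|\approx c_i^2$, and the separation statement (at that scale there is essentially only one complex to block) is proved by combining two offending complexes into a shrinkable complex of one higher level (Lemma~\ref{lem:combine}), which is impossible beyond the maximal level $M$ and otherwise contradicts the blocking already performed at earlier stages. Because the game is played in the whole stratum and not over a fixed surface, this also requires the machinery for moving complexes between nearby flat surfaces (Theorem~\ref{thm:move}, Corollary~\ref{cor:move:ang}) and the product structure $B_j=Z_j\times I_j$ in which Alice only blocks in the angular factor; none of this is present in, or replaceable by, the Eskin--Masur counting argument you invoke, and the sentence ``making this fully quantitative \dots is essentially the main work of the proof'' is precisely the part that is missing and that the proposed route cannot supply. (Your appeal to adapting the $\PMF$ case is also unavailable here: in the paper Theorem~\ref{thm:pmf} is deduced \emph{from} Theorem~\ref{thm:fullspace} via Theorem~\ref{abstract thm}, not the other way around.)

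The non-absolute-winning part is in the right spirit but needs repair: within $\bar U$ the locus where a saddle connection literally vanishes is empty (it lies in the boundary of the stratum), so Bob should aim instead at the codimension-one set $X_\gamma$ of surfaces on which a fixed saddle connection $\gamma$ is vertical; such surfaces are automatically unbounded, Alice's single removed ball cannot separate Bob from $X_\gamma$, and the limit point then lies in $X_\gamma$ — this is the paper's (simpler) argument and avoids your multi-scale construction with $\mathrm{sys}(g_{T_k}q)\to 0$ altogether.
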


Again we remark that the metric is not canonical as it depends on a choice of 
coordinates.  However different choices give bi-Lipschitz equivalent metrics 
and the notion of winning is well-defined.  We remark that bounded has a 
slightly more restrictive meaning here than in the case of $\PMF$ in that in this case 
no saddle connection gets short along the geodesic, while in the case of $\PMF$ 
the condition is slightly weaker in that no simple closed curve gets short.  
The difference in definitions is accounted for by the fact that points in $\PMF$ 
are only defined up to equivalence by Whitehead moves (\cite{FLP}) which 
collapse leaves of a foliation joining singularities to a higher order singularity.  
Thus quadratic differentials whose vertical foliations determine the same point 
in $\PMF$ may lie in different strata.  

\begin{thm}\label{thm:riemann}
Fix a closed Riemann surface $X$ of genus $g>1$ and let $Q^1(X)$ denote 
the space of unit norm holomorphic quadratic differentials on $X$.  
Then the set of $q\in Q^1(X)$ that determine a Teichm\"uller geodesic that 
stays in a compact set of the stratum is strong winning, hence Schmidt winning.  
It is not absolute winning.
\end{thm}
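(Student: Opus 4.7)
My plan is to reduce Theorem~\ref{thm:riemann} to Theorem~\ref{thm:pmf} via the Hubbard--Masur theorem. For a fixed Riemann surface $X$, Hubbard--Masur provides a homeomorphism $Q(X) \to \MF$ sending a quadratic differential to its vertical measured foliation. Restricting to unit norm and then passing to projective classes yields a homeomorphism $\Phi : Q^1(X) \to \PMF$. Crucially, whether the Teichm\"uller geodesic of $q$ stays in a compact set of the stratum depends only on the projective class of its vertical foliation, since the Teichm\"uller flow merely rescales it; equivalently, on whether that foliation is Diophantine in the sense recalled after Theorem~\ref{thm:pmf}. Consequently the set $E \subset Q^1(X)$ of Theorem~\ref{thm:riemann} is exactly $\Phi^{-1}(E_{\mathrm{Dioph}})$, where $E_{\mathrm{Dioph}} \subset \PMF$ is the set provided by Theorem~\ref{thm:pmf}.

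First, I would check that $\Phi$ is bi-Lipschitz with respect to the train-track coordinates on $\PMF$ used in Theorem~\ref{thm:pmf} and the natural coordinates on $Q^1(X)$ inherited from the local period coordinates on the ambient stratum of $X$. This follows from the real-analyticity of Hubbard--Masur (due to Gardiner) together with compactness of $Q^1(X)$. Second, I would invoke the elementary but essential fact that both the strong winning and absolute winning properties are preserved under bi-Lipschitz homeomorphisms, up to a multiplicative adjustment of the parameter $\alpha$. Combining these with Theorem~\ref{thm:pmf} gives at once that $E$ is strong winning in $Q^1(X)$, and, by the same bi-Lipschitz transfer applied to the negative conclusion of Theorem~\ref{thm:pmf}, that $E$ is not absolute winning.

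The main obstacle is the bi-Lipschitz comparison of the two coordinate systems on either side of $\Phi$. Train-track coordinates on $\PMF$ are combinatorial and piecewise linear, whereas the coordinates on $Q^1(X)$ inherited from the ambient stratum depend analytically on the complex structure of $X$ through periods of $\sqrt{q}$. A cell-by-cell analysis in a maximal train-track decomposition, using the extremal-length characterization of unit-norm differentials and the analytic dependence of Hubbard--Masur on the foliation data, should deliver the needed uniform estimate on compact pieces; the global statement then follows by compactness of $Q^1(X)$. A direct alternative would be to re-run the proof of Theorem~\ref{thm:fullspace} with $Q^1(X)$ playing the role of a transversal to the Teichm\"uller flow, using non-divergence estimates to control short-saddle-connection loci in period coordinates restricted to $Q^1(X)$; however, the reduction via $\Phi$ is cleaner given that Theorem~\ref{thm:pmf} is already available.
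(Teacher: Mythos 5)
Your overall route is the same as the paper's: reduce to Theorem~\ref{thm:pmf} via the Hubbard--Masur map sending $q\in Q^1(X)$ to (the projective class of) its vertical foliation. However, the step you yourself flag as the main obstacle is a genuine gap, and your proposed fix does not close it. The Hubbard--Masur homeomorphism is only known to be smooth (hence locally bi-Lipschitz on compacta) where the differentials have simple zeros; at points of $Q^1(X)$ lying in lower strata there is no Lipschitz control, and ``real-analyticity plus compactness of $Q^1(X)$'' does not rescue a global bi-Lipschitz estimate, because the degenerate locus sits \emph{inside} $Q^1(X)$, so the local Lipschitz constants can blow up as you approach it. The paper avoids this entirely: since the complement of $U=Q^1(X)\cap Q(1,\ldots,1,-)$ is a finite union of proper smooth submanifolds, Alice spends finitely many preliminary moves (legal for strong winning, with $\alpha$ small) to confine the game to a ball compactly contained in $U$ with systole bounded below, and only there invokes smoothness of the map and Theorem~\ref{thm:pmf}. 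Some such restriction step is needed in your argument and is absent.

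A second problem is the identification $E=\Phi^{-1}(E_{\mathrm{Dioph}})$. Theorem~\ref{thm:riemann} concerns geodesics staying in a compact set of the \emph{stratum} (no saddle connection degenerates), while the Diophantine condition of Theorem~\ref{thm:pmf} characterizes staying in a compact set of $\M_g$ (no closed curve degenerates); the first condition is stronger, since a saddle connection joining distinct zeros can shrink while the surface remains thick. Thus only $E\subseteq\Phi^{-1}(E_{\mathrm{Dioph}})$ is clear, and winning does not pass to subsets, so your reduction proves the wrong set is winning. The correct reduction (implicit in the paper) is to the set actually produced in the proof of Theorem~\ref{thm:pmf}, namely the image under the vertical-foliation projection of the stratum-bounded differentials of Theorem~\ref{thm:fullspace} in the principal stratum, pulled back through the Hubbard--Masur map on $U$. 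Finally, transferring ``not absolute winning'' through a bi-Lipschitz map is both delicate and unnecessary: the direct argument works, since Bob can keep his balls centered on the codimension-one locus of differentials in $Q^1(X)$ possessing a vertical saddle connection, exactly as in the proof of Theorem~\ref{thm:fullspace}.
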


Here the distance is defined by the norm; namely $d(q_1,q_2)=\|q_1-q_2\|$.
\begin{thm}\label{thm:iet}
Let $\Lambda$ denote the simplex of interval exchange transformations 
$(T,\lambda, \pi)$ on $n$ intervals with a fixed irreducible permutation $\pi$ 
defined on the unit interval $[0,1)$.  We give $\Lambda$ the Euclidean metric.  
Let $E_B$ consist of the {\em bounded}  $(T,\lambda,\pi)$.  This means that 
$\inf_n n|T^n(p_1)-p_2|>0$, where $p_1,p_2$ are discontinuities of $T$.   
Then $E_B$ is strong winning hence Schmidt winning.  It is not absolute winning.
\end{thm}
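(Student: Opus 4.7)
The plan is to parallel the argument of Theorem~\ref{thm:fullspace}, replacing the Teichm\"uller flow on strata with Rauzy-Veech induction on $\Lambda$ as the renormalization dynamics. Concretely, after fixing a height vector $\tau$ satisfying the Veech suspension conditions for $\pi$, the triple $(\lambda,\tau,\pi)$ produces via the zippered rectangle construction a translation surface $M(\lambda)$ in some stratum of quadratic differentials; the assignment $\lambda \mapsto M(\lambda)$ is affine in horizontal period coordinates, hence bi-Lipschitz onto its image. Rauzy-Veech induction on $(\lambda,\pi)$ realizes the first return of the Teichm\"uller flow to the transversal through $M(\Lambda)$, up to a finite choice of Rauzy class.

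The first step is to identify the Diophantine set $E$ with a bounded-orbit condition. The quantity $n\,|T^n(p_1)-p_2|$ measures, up to bounded multiplicative constants, the horizontal length of a saddle connection joining two singularities of $M(\lambda)$ after Teichm\"uller flow of time approximately $\log n$. Hence $\inf_n n\,|T^n(p_1)-p_2|>0$ for all discontinuity pairs is equivalent, via Masur's compactness criterion as refined by Vorobets/Minsky-Weiss, to the Teichm\"uller geodesic through $M(\lambda)$ staying in a compact subset of its stratum. This gives the dictionary between $E \subset \Lambda$ and the bounded-orbit set in the stratum.

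The second step is to produce a strong winning strategy for $E$ on $\Lambda$. Rather than attempting to restrict the strategy from the full stratum (since $M(\Lambda)$ has strictly lower dimension than the stratum, simple restriction of a Schmidt-winning set need not preserve the winning property), I would rerun the argument of Theorem~\ref{thm:fullspace} in the present coordinates. At scale $\rho^k$ Alice's ball lies in a finite union of Rauzy-Veech cells (simplices of $\lambda$'s sharing a common combinatorial prefix of length $\asymp k$), and the sub-balls she must avoid are precisely those on which some further Rauzy-Veech iterate forces a disproportionately small coordinate, equivalently a short saddle connection on the suspension. Finiteness of the Rauzy diagram for fixed $\pi$, combined with contraction of the Rauzy-Veech cocycle in the Hilbert metric on the simplex and log-integrability of return times, supplies the uniform distortion bounds needed to mirror the proof of Theorem~\ref{thm:fullspace}.

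The main obstacle is the bookkeeping required to replay the argument in the Rauzy-Veech setting while keeping uniform control on the sizes of bad balls at every scale; this is where the dependence of $\alpha$ on a systole-type lower bound, as in Theorem~\ref{thm:fullspace}, must be traced through to the Hilbert-metric contraction constants for the Rauzy-Veech matrices along bounded combinatorial types. Finally, for the failure of absolute winning, I would invoke the density of non-uniquely-ergodic IETs (Keane, Masur-Smillie), which form a meager set but accumulate on every $(\lambda,\pi)\in\Lambda$, thereby furnishing arbitrarily small balls of non-bounded-orbit points that Alice cannot evade in the absolute version of the game.
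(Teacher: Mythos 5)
Your step 2 sets aside the route the paper actually uses and in doing so leaves the hardest part unproved. You reject transferring the result from the stratum on the grounds that ``simple restriction of a Schmidt-winning set need not preserve the winning property,'' but the paper does not restrict: it takes the \emph{image} of the bounded set under the projection from holonomy coordinates of a ball $B'$ in the stratum onto the horizontal (length) coordinates, and Theorem~\ref{abstract thm} is exactly the transfer statement showing that such a projection carries an $\alpha$-strong winning set to a $c^2\alpha$-strong winning set in the lower-dimensional target. Combined with Boshernitzan's equivalence (Lemma~\ref{ba is ba}, which is your step 1) and Theorem~\ref{thm:fullspace}, this finishes the proof in a few lines. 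Your substitute --- replaying the blocking argument directly in $\Lambda$ via Rauzy--Veech induction --- is only a plan, and the obstacles you acknowledge are genuine, not bookkeeping: Rauzy--Veech cells of a given combinatorial depth have wildly varying Euclidean diameters, so the identification ``scale $\rho^k$ $\leftrightarrow$ combinatorial prefix of length $\asymp k$'' fails; the Hilbert-metric contraction constants you invoke are uniform only over bounded combinatorial types, which cannot be assumed while Bob is still choosing balls; and the strategy of Theorem~\ref{thm:fullspace} blocks \emph{angles} of shrinkable complexes using the $S^1$-action and the product structure $Z\times S^1$, for which $\Lambda$ alone offers no analogue. So as written the strong winning claim is not established.

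The failure of absolute winning is also argued incorrectly. Density of non-uniquely ergodic IETs does not defeat Alice in the absolute game: Bob must force the single limit point $\cap_j B_j$ to lie outside $E$, and a dense (indeed meager, non-closed) family of bad points gives him no way to do that --- the complement of a countable dense set, for instance, is absolute winning. The paper's argument uses instead a \emph{closed, codimension-one} locus of bad points: in the stratum, quadratic differentials with a prescribed vertical saddle connection; in $\Lambda$, the corresponding connection loci $\{\lambda : T^n(p_1)=p_2\}$, which are pieces of hyperplanes on which $\inf_n n|T^n(p_1)-p_2|=0$. Whatever small ball Alice deletes, Bob can recenter his next ball on this hyperplane, so the limit point retains a connection and lies outside $E$. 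You should replace the unique-ergodicity argument with this one.
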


Because winning has nice intersection properties we obtain the following result 
that there are many interval exchange transformations which are bounded and 
any reordering of the lengths is also bounded.
\begin{cor}
Let $E_B$ be as in Theorem~\ref{thm:iet}.  Then the set 
$$\{\lambda \in \Lambda:(\lambda_{i_1},...\lambda_{i_n})\in E_B 
  \text{ for all } \{i_1,...,i_n\}=\{1,...,n\}\}$$ is nonempty. 
In fact, it has full Hausdorff dimension.
\end{cor}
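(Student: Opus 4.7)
The strategy is to realize the set in the corollary as a finite intersection of preimages of $E$ under permutation-of-coordinates maps, and then to invoke two standard formal properties of (strong) winning sets: preservation under bi-Lipschitz homeomorphism, and closure under countable intersection. For each permutation $\sigma$ of $\{1,\ldots,n\}$, let
\[
\Phi_\sigma \colon \Lambda \to \Lambda, \qquad \Phi_\sigma(\lambda_1,\ldots,\lambda_n) = (\lambda_{\sigma(1)},\ldots,\lambda_{\sigma(n)}),
\]
which is a self-bijection of $\Lambda$ (the constraints $\lambda_i \geq 0$ and $\sum \lambda_i = 1$ are symmetric in the coordinates) and an isometry with respect to the restricted Euclidean metric. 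The set in the corollary is then exactly $S := \bigcap_{\sigma \in S_n} \Phi_\sigma^{-1}(E)$.

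First I would observe that since $\Phi_\sigma$ is a bi-Lipschitz (in fact isometric) self-homeomorphism of $\Lambda$, and since strong winning is invariant under such homeomorphisms, Theorem~\ref{thm:iet} implies that each $\Phi_\sigma^{-1}(E)$ is a strong winning subset of $\Lambda$. Next, because $\alpha$-strong winning sets (for a fixed $\alpha$) are closed under countable intersection, the finite intersection $S$ is itself strong winning, in particular winning for the Schmidt game on $\Lambda$.

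Finally, to promote nonemptiness to full Hausdorff dimension, I would quote the standard fact (going back to Schmidt, and extended to the strong variant in \cite{McM}) that a winning subset of a bounded subset of Euclidean space supporting an Ahlfors regular measure has the same Hausdorff dimension as the ambient space; $\Lambda$ is such a space. This gives $\dim_H S = \dim_H \Lambda = n-1$, which in particular implies $S \neq \emptyset$.

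There is no real obstacle here beyond verifying that the formal properties being used are genuinely enjoyed by the flavor of winning produced by Theorem~\ref{thm:iet}. The only thing one might worry about is whether $\Phi_\sigma$ is truly an isometry for the metric implicit in the Schmidt game on $\Lambda$ used in the proof of Theorem~\ref{thm:iet}; but any choice of natural metric on the simplex (Euclidean, or induced from the sum norm on $\mathbb{R}^n$) is symmetric in the coordinates, so this is automatic.
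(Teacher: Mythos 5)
Your proposal is correct and is exactly the argument the paper intends (the paper only gestures at it with ``winning has nice intersection properties''): coordinate permutations are isometries of $\Lambda$, so each $\Phi_\sigma^{-1}(E)$ is strong winning with the same constant by Theorem~\ref{thm:iet}, the finite intersection is therefore winning, and winning sets have full Hausdorff dimension, hence are nonempty.
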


The main theorem we prove from which the other theorems will follow is 
a one-dimensional version. 
\begin{thm}\label{main}
Let $q$ be a holomorphic quadratic differential on a closed Riemann surface of 
genus $g>1$.  Then the set $E$ of directions $\theta$ in the circle $S^1$ with 
the Euclidean metric such that the Teichm\"uller geodesic defined by $e^{i\theta} q$ 
stays in a compact set of the corresponding stratum in the moduli space of 
quadratic differentials is an absolute winning set; hence strong winning.   
\end{thm}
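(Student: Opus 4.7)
The plan is to convert boundedness of the Teichm\"uller geodesic into a Diophantine-type lower bound involving holonomies of saddle connections of $q$, and then play Schmidt's absolute game against the resulting union-of-arcs bad set. For a saddle connection $\gamma$ with holonomy $v_\gamma=\int_\gamma\sqrt{q}$ and $\theta_\gamma=\arg v_\gamma$, the flat length of $\gamma$ along the Teichm\"uller geodesic of $e^{i\theta}q$ is
\[
\ell_t(\gamma)^2=e^{2t}(\mathrm{Re}(e^{-i\theta}v_\gamma))^2+e^{-2t}(\mathrm{Im}(e^{-i\theta}v_\gamma))^2,
\]
whose minimum over $t\in\R$ equals $|v_\gamma|^2\,|\sin 2(\theta_\gamma-\theta)|$. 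By the standard characterization of compact subsets of a stratum via a uniform lower bound on the shortest saddle connection, this gives
\[
E=\bigl\{\theta\in S^1:\inf_\gamma|v_\gamma|^2|\sin 2(\theta_\gamma-\theta)|>0\bigr\},
\]
and the complement $S^1\setminus E=\bigcap_{c>0}\bigcup_\gamma B_\gamma(c)$, where $B_\gamma(c)=\{\theta:|v_\gamma|^2|\sin 2(\theta_\gamma-\theta)|<c\}$ is a union of four arcs of length $\lesssim c/|v_\gamma|^2$ centered at $\theta_\gamma+k\pi/2$.

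Fix any $\beta<1/3$ together with a small constant $c=c(\beta,q)>0$ to be specified. Alice's strategy draws on two ingredients. First, Masur's quadratic bound $\#\{\gamma:|v_\gamma|\leq L\}\leq C(q)L^2$: at scale $\rho$ only those $\gamma$ with $|v_\gamma|^2\leq c/(\beta\rho)$ can have a bad arc of width exceeding $\beta\rho$, and there are at most $O(c/(\beta\rho))$ such $\gamma$. Second, a geometric separation property for the holonomy set $\{v_\gamma\}\subset\R^2$, of Masur--Smillie type: two distinct saddle connections cannot have holonomies that are simultaneously short and nearly parallel, because the area of the triangle they span in the flat structure is bounded below by a positive constant depending only on $q$. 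Together these should ensure that at round $k$ at most one ``dominant'' bad arc of width $\asymp\beta\rho_k$ meets Bob's ball $B_k$ of radius $\rho_k$; Alice deletes a sub-ball of radius $\beta\rho_k$ covering that arc, and an induction on $k$ maintains that every already-treated arc $B_\gamma(c)$ is disjoint from all of Bob's subsequent balls. The unique limit point of Bob's balls then satisfies $|v_\gamma|^2|\sin 2(\theta_\gamma-\theta)|\geq c$ for every $\gamma$ and so lies in $E$.

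The main obstacle is establishing the single-dominant-arc lemma cleanly, and calibrating the passage across dyadic bands of $|v_\gamma|$ so that Alice's one allowed removal per round really does knock out every threat as Bob zooms in; this is where the interplay between the quadratic counting bound and the Minkowski-style separation of holonomies has to be pushed to a quantitative statement. A minor but useful observation supporting the whole argument is that rotating $q$ by $e^{i\theta}$ fixes the set of saddle connections and acts on their holonomies by $v_\gamma\mapsto e^{-i\theta}v_\gamma$, so one analyzes a single fixed $q$ throughout and both the counting bound and the separation property hold uniformly in $\theta$. Granting this, the strong winning conclusion is immediate from McMullen's implication that absolute winning implies strong winning.
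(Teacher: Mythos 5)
Your reduction of boundedness to the Diophantine condition on saddle connection holonomies is correct and is essentially the paper's Proposition~\ref{equiv cond} (up to the harmless difference that you minimize over all $t\in\R$, which only shrinks the target set), and the overall frame of blocking arcs around directions $\theta_\gamma$ is the right one. The genuine gap is the ``separation property for the holonomy set'' on which your single-dominant-arc lemma rests. The claim that two distinct saddle connections cannot be simultaneously short and nearly parallel, because $|v_{\gamma_1}\times v_{\gamma_2}|$ is bounded below by a constant depending only on $q$, is false in genus $g>1$. That Minkowski-type bound is a genus-one phenomenon (two non-parallel primitive vectors on the unit-area torus have wedge at least $1$); on a higher-genus flat surface, disjoint saddle connections can be exactly or nearly parallel and simultaneously short --- e.g.\ the boundary saddle connections of a cylinder, or more generally the many disjoint saddle connections that degenerate together along a Teichm\"uller ray. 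Consequently the bad arcs $B_\gamma(c)$ can cluster inside Bob's ball at every scale, and Masur's quadratic counting bound gives no control over their angular distribution, so the claim that at most one dominant arc meets $B_k$ cannot be salvaged; with only one deletion of relative size $\beta$ per round, Alice cannot cover a cluster of comparable-width arcs.

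This failure of separation is precisely the difficulty the paper is organized around. Its proof replaces individual saddle connections by \emph{shrinkable complexes}: collections of disjoint saddle connections that become short simultaneously under $g_tr_\theta$. The combination lemmas (Lemma~\ref{lem:sigma}, Lemma~\ref{lem:combine}) show that two nearby shrinkable complexes of the same level that are topologically distinct can be merged into a shrinkable complex of one level higher, and since no shrinkable complex of level $M+1$ exists, complexes of the top level \emph{are} separated --- that is the correct substitute for your false pairwise bound. Alice then plays the hierarchical strategy of Theorem~\ref{blocking strategy}: she blocks top-level complexes, then, level by level, the lower-level complexes that survive (whose separation at each stage is proved inductively by the same merging argument against what was already blocked), cycling through the $M$ levels. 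Because she must block up to $M$ intervals per cycle rather than one, the paper first proves the lemma reducing the $M$-interval modified absolute game to the genuine absolute game. Without an analogue of this multi-scale, multi-level blocking (or some other device handling clusters of parallel short saddle connections), your strategy does not go through.
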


In Theorem~\ref{main} we call the directions in $E$ {\em bounded} directions.
Here is an equivalent formulation of Theorem \ref{main} (Proposition~\ref{equiv cond} establishes the equivalence). 

\begin{thm}\label{thm:equiv} Let $$S=\{(\theta,L): \theta \text{ is the direction of a saddle connection of $q$ of length } L\}.$$
 Then the set $E$ of bounded directions $\psi$ in the circle is the same as  
 $$\{\psi:\underset{(\theta,L)\in S}{\inf}L^2|\theta-\psi|>0\}$$ 
 and this is an absolute winning set.  
\end{thm}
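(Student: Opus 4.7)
The equivalence of the two characterizations of $E$ is supplied by Proposition~\ref{equiv cond}, so the remaining task is the absolute winning claim. Fix $\beta \in (0, 1/3)$; I describe a winning strategy for Alice in the $\beta$-absolute game on $S^1$. The key analytic input is an angular gap estimate for saddle connections of $q$: there is a constant $c_0 = c_0(q) > 0$ such that for any two non-parallel saddle connection holonomies $\vec{v}_1, \vec{v}_2$ one has $|\vec{v}_1 \wedge \vec{v}_2| \geq c_0$, a ``no small parallelogram'' fact arising from the flat-surface geometry. Writing this as $L_1 L_2 |\sin(\theta_1 - \theta_2)| \geq c_0$ gives the angular gap $|\theta_1 - \theta_2| \geq c_0/L^2$ whenever $L_1, L_2 \leq L$; hence every arc of $S^1$ of radius $r$ contains at most $2rL^2/c_0 + 1$ saddle connection directions of length $\leq L$.

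For a parameter $c > 0$, to be chosen sufficiently small in terms of $\beta$ and $c_0$, and each $s = (\theta_s, L_s) \in S$, set $I_s := (\theta_s - c/L_s^2,\, \theta_s + c/L_s^2)$; then $\psi \in E$ if and only if $\psi \notin \bigcup_s I_s$ for some $c > 0$. At stage $n$ with Bob's ball $B_n$ of radius $r_n$, Alice can \emph{kill} a saddle connection $s$ with $L_s \geq L_n^* := \sqrt{c/(\beta r_n)}$ (equivalently $|I_s| \leq 2\beta r_n$) by centering her $\beta r_n$-ball on $I_s \cap B_n$; thereafter $I_s \cap B_m = \emptyset$ for every $m > n$, permanently neutralizing $s$. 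Call an active $s$ (i.e.\ one with $I_s \cap B_n \neq \emptyset$) \emph{urgent} at stage $n$ if $L_s \in [L_n^*, L_{n+1}^*)$, that is, $s$ is killable now but not after stage $n$. Alice's strategy is to kill the urgent $s$ at each stage if one exists; otherwise she kills the active $s$ with smallest $L_s$, thereby getting ahead. Choosing $c$ small enough in terms of $\beta$ and $c_0$, the counting estimate applied to $B_n$ yields that at most one saddle connection is urgent at each stage, so Alice's one-per-stage kill rate suffices to meet every deadline.

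For the finitely many $s$ with $L_s < L_0^*$, whose $I_s$ may be too large for Alice to cover directly, she interleaves finitely many side moves, each centering her ball on one exceptional $\theta_s$; this forces $|\bigcap_n B_n - \theta_s| \geq \beta r_n > 0$ and hence a positive value of $L_s^2|\theta_s - \bigcap_n B_n|$ from each such $s$. Combining these ingredients, $\inf_{s \in S} L_s^2|\theta_s - \bigcap_n B_n| > 0$, so $\bigcap_n B_n \in E$; since $\beta \in (0,1/3)$ was arbitrary, $E$ is absolute winning. The main obstacle is verifying, uniformly over all worst-case Bob plays with $r_n = \beta r_{n-1}$ contractions, that the angular gap counting produces at most one urgent saddle connection per stage—this requires the precise choice of $c$ and careful handling of the priority ordering so that non-urgent but active threats cannot pile up and miss their eventual deadlines; a subsidiary point is that by always placing her ball inside $B_n$, Alice forces $r_n$ to shrink geometrically so that $\bigcap_n B_n$ is a single point rather than a residual ball.
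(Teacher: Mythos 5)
Your reduction of the equivalence to Proposition~\ref{equiv cond} is fine, but the winning strategy rests on a false analytic input. The claimed ``no small parallelogram'' estimate --- a constant $c_0(q)>0$ with $|v_1\wedge v_2|\ge c_0$ for all non-parallel saddle connection holonomies --- is exactly the feature of the torus (where it is the separation of rationals, $|p_1q_2-p_2q_1|\ge 1$) that fails in higher genus. The wedge product is $SL(2,\mathbb{R})$-invariant, so a uniform lower bound would mean that no surface in the $SL(2,\mathbb{R})$-orbit closure of $q$ carries two short non-parallel saddle connections. But every stratum of genus $g>1$ contains surfaces with clusters of short, pairwise non-parallel saddle connections (several zeroes colliding, or a small handle), and for a generic $q$ the orbit is dense in its stratum; hence $\inf|v_1\wedge v_2|=0$ over non-parallel pairs for such $q$. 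Two short saddle connections need not intersect, so no intersection-type area bound rescues the claim. Once $c_0$ is gone, your counting bound (at most $2rL^2/c_0+1$ directions of length $\le L$ in an arc of radius $r$) fails: near a direction in which a whole complex of saddle connections becomes simultaneously short, arbitrarily many ``urgent'' saddle connections can appear inside a single ball of Bob, and the one-kill-per-stage bookkeeping cannot meet its deadlines, no matter how $c$ is chosen.

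This is precisely the difficulty the paper is organized around. Instead of blocking individual saddle connections, it blocks \emph{shrinkable complexes}: collections of disjoint saddle connections that can be made simultaneously short, graded by level up to a maximum $M$. Separation does hold, but only one level at a time and only after the previous level has been blocked: if two comparable, angularly close shrinkable complexes of level $i$ were distinct, Lemmas~\ref{lem:sigma} and~\ref{lem:combine} would combine them into a shrinkable complex of level $i+1$, which is either impossible (at the top level) or already excluded by the inductive blocking at an earlier stage. This yields the hierarchical strategy of Theorem~\ref{blocking strategy}, in which Alice cycles through the $M$ levels (formally, she plays the modified game in which she may block $M$ intervals at once), and only at level one does the argument reduce to blocking single saddle connections as in your sketch. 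If you want to salvage your approach, you would need to replace the false global gap estimate by this level-by-level separation mechanism, at which point you have essentially reconstructed the paper's proof.
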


As an immediate corollary we get the following result which was first proved by Kleinbock and Weiss \cite{baflat} using quantitative non-divergence of horocycles \cite{MW}.
\begin{cor}
The set of directions such that the Teichm\"uller geodesic stays in a compact subset of the stratum has Hausdorff dimension $1$.
\end{cor}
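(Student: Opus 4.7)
The plan is to deduce this immediately from Theorem~\ref{main} together with the standard fact that Schmidt winning sets in $\mathbb{R}^n$ (and hence on any smooth $n$-manifold, via charts) have full Hausdorff dimension $n$. Concretely, I would proceed as follows.

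First, Theorem~\ref{main} gives that the set $E$ of bounded directions is absolute winning in $S^1$. One then invokes the implication, established by McMullen in \cite{McM}, that absolute winning sets are $\alpha$-winning for every $\alpha\in(0,1/2)$ in the sense of Schmidt. Hence $E$ is Schmidt winning on $S^1$ with any parameter $\alpha<1/2$.

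Next, apply Schmidt's theorem from his original paper: any $\alpha$-winning set in $\mathbb{R}^n$ has Hausdorff dimension $n$. Since $S^1$ is locally isometric to $\mathbb{R}$, and winning is preserved under bi-Lipschitz coordinate changes on small balls (and one can always restrict the game to a chart), this yields $\dim_H E\geq 1$. The reverse inequality $\dim_H E\leq 1$ is immediate from $E\subset S^1$.

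There is essentially no obstacle here; the content of the corollary lies entirely in Theorem~\ref{main}, and what remains is just quoting the general principle that winning sets are large in the Hausdorff dimension sense. The only thing to be mildly careful about is that Schmidt's full-dimension statement is phrased in Euclidean space, so one should note that $S^1$ admits a covering by Euclidean charts and that the Schmidt game is stable under passage to a neighborhood of any point where winning is verified.
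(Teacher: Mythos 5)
Your proposal is correct and matches the paper's intended argument: the corollary is stated there as an immediate consequence of Theorem~\ref{main}, using exactly the chain absolute winning $\Rightarrow$ strong winning $\Rightarrow$ Schmidt winning together with Schmidt's full Hausdorff dimension theorem (with the routine passage from $S^1$ to $\mathbb{R}$ via charts). Nothing further is needed.
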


It is well-known that a billiard in a  polygon $\Delta$ whose vertex angles are 
rational multiples of $\pi$ gives rise to a translation surface by an unfolding process.  
We have the following  corollary to Theorem~\ref{thm:equiv}.  
\begin{cor}\label{cor:rational}
Let $\Delta$ be a rational polygon.  The set $E$ of directions $\theta$ 
for the billiard flow in $\Delta$ with the property that  there is 
an $\epsilon=\epsilon(\theta)>0$, so that for all $L>0$, the billiard path 
in direction $\theta$ of length smaller than $L$ starting at any vertex, 
stays outside an $\frac{\epsilon}{L}$ neighborhood of all vertices of $\Delta$, 
is an absolute winning set.  
\end{cor}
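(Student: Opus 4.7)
The plan is to reduce to Theorem~\ref{thm:equiv} via the standard unfolding construction. A rational polygon $\Delta$ unfolds to a translation surface carrying a holomorphic quadratic differential $q$ on a closed Riemann surface $X$, and billiard trajectories in $\Delta$ in direction $\theta$ lift to straight-line trajectories on $X$ in the corresponding direction. The vertices of $\Delta$ correspond to singularities of $q$, and billiard paths emanating from vertices lift to geodesic segments emanating from singularities. The unfolding map from billiard directions to directions on $(X, q)$ is a finite, bi-Lipschitz cover of circles; since bi-Lipschitz bijections preserve absolute winning both forwards and backwards, it suffices to prove the analogous statement for the straight-line flow on $(X, q)$.

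The core step is to show that a direction $\theta$ satisfies the billiard-type condition --- there exists $\epsilon>0$ such that every geodesic segment of length $\le L$ starting at a singularity remains outside the $\epsilon/L$-neighborhood of every singularity, for every $L>0$ --- if and only if $\theta$ satisfies the saddle connection condition
\[
\inf_{(\theta_\beta, L_\beta) \in S} L_\beta^2\,|\theta - \theta_\beta| > 0
\]
of Theorem~\ref{thm:equiv}. Suppose a segment in direction $\theta$ starting at a singularity $v$ approaches another singularity to within distance $\delta$ at time $t \le L$. Rotating $\theta$ by an angle of order $\delta/t$ produces a segment that terminates at the first singularity it meets, giving an honest saddle connection $\beta$ with length $L_\beta \le t$ and direction $\theta_\beta$ satisfying $|\theta-\theta_\beta| \lesssim \delta/t$. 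Hence
\[
L_\beta^2\,|\theta - \theta_\beta| \lesssim t\,\delta \le L\,\delta,
\]
so a near-miss with $\delta < \epsilon/L$ forces $L_\beta^2|\theta - \theta_\beta| \lesssim \epsilon$. The converse is immediate: a saddle connection $\beta$ in direction $\theta_\beta$ close to $\theta$ produces, along the $\theta$-trajectory from its initial endpoint, a near-miss of its terminal singularity of distance $\approx L_\beta|\theta-\theta_\beta|$ at time $\approx L_\beta$.

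With the equivalence in hand, Theorem~\ref{thm:equiv} applied to $(X,q)$ gives that the set of directions satisfying the saddle connection condition is absolute winning, and pulling back through the bi-Lipschitz unfolding correspondence yields the corollary. The step requiring the most care is the conversion estimate above: one must ensure that the perturbed trajectory is truncated at the \emph{first} singularity it encounters (so as to produce an honest saddle connection, with no intermediate singularities in its interior) and that the implicit constants depend only on the local flat geometry of $(X,q)$ near its cone points, not on $L$. Once this bookkeeping is arranged, the corollary follows by invoking Theorem~\ref{thm:equiv}, so the main conceptual obstacle --- proving absolute winning directly --- has been absorbed into that theorem.
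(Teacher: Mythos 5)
Your proposal is correct and follows essentially the same route the paper intends: unfold the rational billiard to a flat surface, translate the $\frac{\epsilon}{L}$ vertex-avoidance condition into the condition $\inf L^2|\theta-\theta_\beta|>0$ on saddle connection directions via the standard near-miss/perturbation estimate, and invoke Theorem~\ref{thm:equiv} (the paper treats exactly this reduction as immediate). The only point worth tightening is that vertices of angle $\pi/n$ unfold to regular points, so strictly one should treat them as marked points (as the paper does elsewhere for the marked-point setting), but this does not change the argument.
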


Since for any $0<\alpha<2\pi$ and $n\in\Z$, the set $E+n\alpha$ (mod $2\pi$) is 
an isometric image of $E$, it is also Schmidt winning with the same winning constant 
for each $n$.  Therefore, the infinite intersection $\cap_{n\in\Z} (E+n\alpha)$ is 
Schmidt winning, and thus has Hausdorff dimension $1$.  
This gives the following corollary.
\begin{cor}
For any $\alpha$ there is a Hausdorff dimension $1$ set of angles $\theta$ 
such that for any $n$, there is $\epsilon_{n}>0$ such that a billiard path at 
angle $\theta+n\alpha (mod(2\pi))$ and length at most $L$ from a vertex 
does not enter a neighborhood of radius $\frac{\epsilon_n}{L}$ of any vertex. 
\end{cor}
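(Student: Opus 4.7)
The plan is to combine the previous corollary with two standard properties of Schmidt's game, namely isometry-invariance of the winning constant and the countable intersection property. By the previous corollary, the set $E\subset S^1$ of billiard directions whose vertex-to-vertex trajectories avoid an $\epsilon(\theta)/L$-neighborhood of every vertex is absolute winning, hence Schmidt $\alpha_0$-winning for some $\alpha_0>0$.

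For each integer $n$ the rotation $\theta\mapsto \theta+n\alpha\pmod{2\pi}$ is an isometry of $S^1$. Since Schmidt's game depends only on the metric structure, the translated set $E_n:=E+n\alpha\pmod{2\pi}$ is again $\alpha_0$-winning with the same constant. Schmidt's fundamental theorem says that the intersection of any countable family of $\alpha_0$-winning sets is $\alpha_0$-winning, so $F:=\bigcap_{n\in\Z} E_n$ is $\alpha_0$-winning on $S^1$. Because $\alpha_0$-winning subsets of $S^1$ inherit the full Hausdorff dimension of the ambient space, $\dim_H F=1$.

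It remains to unpack what membership in $F$ means. By construction, $\theta\in F$ iff $\theta+n\alpha\pmod{2\pi}\in E$ for every $n\in\Z$, which by the definition of $E$ in the preceding corollary is exactly the assertion that there exists $\epsilon_n=\epsilon(\theta+n\alpha)>0$ such that every billiard path of length at most $L$ in direction $\theta+n\alpha$ issuing from a vertex of $\Delta$ avoids the $\epsilon_n/L$-neighborhood of every vertex.

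The proof involves no real obstacle beyond being careful that the winning constant $\alpha_0$ does not deteriorate as $n$ varies, which is immediate from the fact that rotations of $S^1$ are isometries. All the genuine content has already been absorbed into the preceding corollary, whose proof in turn rests on Theorem~\ref{thm:equiv}; the present corollary is a formal consequence that illustrates the main advantage of winning over weaker notions of largeness such as full Hausdorff dimension, namely stability under countable intersections.
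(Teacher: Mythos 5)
Your proposal is correct and follows the same route as the paper: translate $E$ by the isometries $\theta\mapsto\theta+n\alpha$, note that each translate is winning with the same constant, apply the countable intersection property of $\alpha$-winning sets, and conclude full Hausdorff dimension. (The only cosmetic point is that $\theta\in E+n\alpha$ means $\theta-n\alpha\in E$, but since $n$ ranges over all of $\Z$ the intersection is the same set, exactly as in the paper.)
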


Another corollary uses the absolute winning property but does not follow 
just from Schmidt winning.  Let $E$ be the set from Corollary~\ref{cor:rational}. 
\begin{cor}
The set $E'=\{\theta:\forall n\in\Z_{>0}, n\theta\in E\}$ 
has Hausdorff dimension $1$.
\end{cor}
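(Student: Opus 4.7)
The plan is to show that $E'$ is an absolute winning subset of $S^1$; since absolute winning subsets of a one-dimensional metric space attain the full Hausdorff dimension, this gives $\dim_H(E') = 1$.

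For each $n \in \Z_{>0}$ let $m_n : S^1 \to S^1$ be multiplication by $n$ and set $E_n := m_n^{-1}(E)$, so that $E' = \bigcap_{n=1}^\infty E_n$. The first step is to verify that every $E_n$ is absolute winning. Decompose $S^1$ into $n$ arcs $I_0, \ldots, I_{n-1}$ of equal length, on each of which $m_n$ restricts to a similarity of scaling factor $n$ onto $S^1$. In the $\beta$-absolute winning game targeting $E_n$ with opening move $B_0$, Alice first plays a finite sequence of auxiliary moves to localize the game ball into a single arc $I_j$: she shrinks the ball below the scale $\pi/n$ and, when needed, removes a small ball around an arc endpoint so that Bob is forced to stay on one side. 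From that stage onwards she transports the game from $I_j$ to $S^1$ via the similarity $m_n$, and follows the winning strategy for $E$ provided by Theorem~\ref{main}. Because $m_n$ acts as a similarity on $I_j$, the radius constraints of the $\beta$-game pull back exactly, and the intersection of Bob's balls must lie in the preimage of $E$ under $m_n$, namely in $E_n$.

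With each $E_n$ established as absolute winning, the second step is to invoke the standard countable intersection property for absolute winning sets to conclude that $E' = \bigcap_{n} E_n$ is absolute winning, and hence $\dim_H(E') = 1$.

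The main obstacle is the localization step, namely arranging for the game ball to enter a single arc $I_j$ and then cleanly transporting the strategy for $E$ through the covering map. The remark preceding the corollary emphasizes that this argument genuinely uses absolute winning and not merely Schmidt $\alpha$-winning: in the absolute winning game Alice is free to remove balls anywhere in the ambient space, not only sub-balls of Bob's current ball. This extra flexibility is what permits the initial localization move while keeping the winning parameter uniform across all $n$, whereas in the classical Schmidt game the winning parameters would in general degrade under passage to preimages and iterated intersection.
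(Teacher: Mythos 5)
Your proposal is correct and follows essentially the same route the paper intends: the corollary is left to rest on the observations that each set $\{\theta : n\theta\in E\}$ is absolute winning (being locally a similarity copy of the absolute winning set $E$ under the covering map $\theta\mapsto n\theta$, with contraction ratios $1/n\to 0$) and that countable intersections of absolute winning sets are again absolute winning, which are exactly the two steps you carry out. The only difference is that you verify the first step by an explicit localization-and-transport strategy in the $\beta$-absolute game rather than citing McMullen's invariance results, and that verification is sound.
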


The core theorems that we prove are Theorem~\ref{main} for the set of bounded 
directions in the disc and Theorem~\ref{thm:fullspace} for winning in the stratum.  
The former is a model for the latter although the former proves absolute winning and 
the latter strong winning.  The fairly general Theorem~\ref{abstract thm} will reduce 
Theorem~\ref{thm:pmf} and Theorem~\ref{thm:iet} to Theorem~\ref{thm:fullspace}.  
Theorem~\ref{thm:riemann} reduces to Theorem~\ref{thm:pmf}.

\subsection{Acknowledgments}
The authors would like to thank Dmitry Kleinbock and Barak Weiss for telling the 
first author about this delightful problem and helpful conversations and to thank 
Curtis McMullen for his conversations with the third author.  The authors would 
also like to thank the referee for numerous helpful suggestions.

\section{Strong and absolute winning sets}\label{S:winning}

\subsection{Schmidt games}
We describe the Schmidt game in $\R^n$.  Suppose we are given a set $E\subset\R^n$.
Suppose two players Bob and Alice take turns choosing a sequence of closed Euclidean balls  $$B_1\supset A_1\supset B_2\supset A_2\supset B_3\ldots $$ (Bob choosing the $B_i$ and Alice the $A_i$) whose diameters satisfy, for fixed $0<\alpha,\beta<1$,  
\begin{equation}\label{def:Schmidt}
   |A_i|=\alpha |B_i| \quad\text{and}\quad |B_{i+1}|=\beta |A_i|.  
\end{equation}
Following Schmidt 
\begin{definition}
We say $E$ is an $(\alpha,\beta)$-{\em winning set} if Alice has a strategy so that no matter what Bob does, $\cap_{i=1}^\infty B_i\in E$. It is $\alpha$-{\em winning} if it is $(\alpha,\beta)$-winning for all $0<\beta<1$.  $E$ is a {\em winning set for Schmidt game} if it is $\alpha$-winning for some $\alpha>0$.
\end{definition}

Their main properties, proved by Schmidt in \cite{Sgame}, are: 
\begin{itemize}
  \item they have full Hausdorff dimension,
  \item they are preserved by bi-Lipschitz mappings (the constant $\alpha$ can change), 
  \item a countable intersection of $\alpha$-winning sets is $\alpha$-winning. 
\end{itemize}

McMullen \cite{McM} suggested two variants of the Schmidt game as follows.  
The first variant replaces (\ref{def:Schmidt}) with 
\begin{equation}\label{def:strong}
   |A_i|\ge\alpha |B_i|\quad\text{and}\quad |B_{i+1}|\ge\beta |A_i|.  
\end{equation}
The notions of $(\alpha,\beta)$-strong and $\alpha$-strong winning sets are similarly defined.  (Bob wins if $\cap_{i=1}^\infty B_i\cap E=\emptyset$; otherwise, Alice wins).  A \emph{strong winning set} refers to a set that is $\alpha$-strong winning for some $\alpha>0$.  

In the second variant, the sequence of balls $B_i$, $A_i$ must be chosen so that 
  $$B_1\supset  B_1\setminus A_1\supset B_2\supset B_2\setminus A_2
     \supset B_3 \supset\dots $$ and for some fixed $0<\beta<1/3$,  
  $$|A_i|\leq \beta |B_i| \quad\text{and}\quad |B_{i+1}|\geq \beta |B_i|.$$  
We say $E$ is $\beta$-\emph{absolute winning} if Alice has a strategy that forces 
  $\cap_{i=1}^\infty B_i\cap E=\emptyset$ regardless of how Bob responds.  
An \emph{absolute winning set} is one that is $\beta$-absolute winning for all $0<\beta<1/3$.  (Remark: The condition $\beta<1/3$ ensures Bob always has moves 
available to him no matter how Alice plays her moves.)  
It is also clear that if a set is absolute winning for some $\beta_0$ then it is absolute winning for $\beta>\beta_0$.

As noted in the introduction, absolute winning implies strong winning, which in turn 
implies winning in the sense of Schmidt.  In particular, both types of sets have 
full Hausdorff dimension.  These notions provide two new classes of sets that also 
have the countable intersection property and are not only bi-Lipschitz invariant, 
but preserved by the much larger class of quasi-symmetric homeomorphisms.  
(See \cite{McM}.)  
As McMullen notes, most sets known to be winning in the sense of Schmidt 
are in fact strong winning, as is the case with the set of badly approximable 
vectors in $\R^n$.  Since any subset of $\R^n$ that contains a line segment 
in its complement cannot be absolute winning (because Bob can always 
choose $B_j$ centered at a point on this line segment), the set of badly 
approximable vectors in $\R^n$ ($n\ge2$) provides a natural example of 
a strong winning set that is not absolutely winning.  However, it is far from 
obvious that there are winning sets in the sense of Schmidt that are not 
strong winning (\cite{McM}).  

\subsection{Projections and the simultaneous blocking game}
Theorem~\ref{thm:pmf} and Theorem~\ref{thm:iet} will follow from  Theorem~\ref{thm:fullspace} by use of 
the following fairly general statement. 

\begin{definition}
A surjective map $f:X\to Y$ between metric spaces is a quasi-symmetry
if there exists $0<c<1$ such that for all $(x,r)\in X\times \R_+$,
$$B_Y(f(x),cr)\subset  f(B_X(x,r))\subset B_Y(f(x),r/c).$$

\end{definition}

\begin{thm} \label{abstract thm}
Suppose $f:X\to Y$ is a surjective quasi-symmetry between complete metric spaces and $E\subset X$ is $\alpha$-strong winning.
Then  $f(E)$   is $c^2\alpha$-strong winning. 
(Here winning means that once Bob chooses an initial  ball in $Y$ then Alice has a strategy to force the intersection point to lie in $f(E)$).
\end{thm}

We remark that linear projection  maps from $\mathbb{R}^n$ onto subspaces obviously satisfy the hypotheses and these are what will occur in the proofs of Theorem~\ref{thm:pmf} and Theorem~\ref{thm:iet}, but we wish to prove a more general theorem. 

\begin{proof}
First we claim  that if $s<r, r_0',x_0'\in X,z\in Y$, and $y'\in B_X(x_0',r_0')$ are such that   $B_Y(z,s)\subset B_Y(f(y'),cr)$ then  there exists $z'\in f^{-1}(z)$ such
that $B_X(z',s)\subset B_X(y',r)$.
We prove the claim. 

Since $B_Y(z,s)\subset B_Y(f(y'),cr)$ we have  
$B_Y(z,cs)\subset B_Y(f(y'),cr)$ which in turn implies  that 
$z\in B_Y(f(y'),c(r-s))$. 
It follows  from the hypotheses of the Theorem that there is a $z'\in B_X(y',r-s)$ such that $f(z')=z$.
The triangle inequality now implies that $B_X(z',s)\subset B_X(y',r)$, proving the claim. 

Now we  show that $f(E)$ is $(c^2\alpha,\beta)$-strong winning in $Y$ by winning an auxiliary $(\alpha, c^2\beta)$-strong winning game in $X$. 
We  describe the inductive strategy.  We are given  $B_X(x_k,r)$ and $B_Y(f(x_k),cr)$ where $B_X(x_k,r)$ is part of Alice's $(\alpha, c^2 \beta)$-strong winning strategy in $X$ and $B_Y(f(x_k),cr)$ is Alice's move in the $(c^2\alpha,\beta)$-game in $Y$.
 Bob chooses $B_Y(z,s) \subset B_Y(f(x_k),cr)$ and $s\geq \beta cr$. 

 By the claim  there exists $z'\in f^{-1}(z)$  such that 
$$B_X(z',cs) \subset B_X(x_k,r) \text{ with }cs\geq c^2\beta r.$$ 
So $B_X(z',cs)$ is a legal move in the $(\alpha,c^2\beta)$-strong winning game (in $X$) given Alice's move $B_X(x_k,r)$. 
So Alice has a response $$B_X(x_{k+1}, t) \subset B_X(z',cs)\text{ and }t\geq \alpha cs$$
as part of her $(\alpha, c^2\beta)$-strong winning strategy in $X$, which we assumed existed.

 Now 
 $$B_Y(f(x_{k+1}),ct) \subset f(B_X(x_{k+1},t))\subset f(B_X(z',cs)) \subset B_Y(z,s) \text{ and }ct\geq c^2\alpha s.$$ So it is a legal move for the $(c^2\alpha,\beta)$-strong winning game given Bob's move $B_Y(z,s).$
 
 Because the auxiliary game is a legal $(\alpha,c^2\beta)$-game we have $\cap_{k=1}^{\infty} B_X(x_k,r_k) \in E$. Thus 
 $$\cap_{k=1}^{\infty} B_Y(f(x_k),cr) \subset \cap_{k=1}^{\infty} f(B_X(x_k,r)) \in f(E).$$ So $f(E)$ is $(c^2\alpha,\beta)$-strong winning.
 \end{proof}
 
To prove Theorem~\ref{main}, (played on the circle $S^1$) it will be convenient to consider a variation on the absolute winning game where Alice is permitted to simultaneously block $M$ intervals of radii $\le\beta|I_j|$.\footnote{The authors would like to thank Barak Weiss 
for bringing our attention to this variant of the absolute winning game.}  (The condition 
$(2M+1)\beta<1$ ensures that Bob will always have available moves.)  

\begin{lemma}\label{lem:simblock}
Suppose $E$ is a winning set for the modified game where Alice is permitted to 
simultaneously block $M$ intervals of length at most $\beta^M$ times the length 
of Bob's interval.    Then $E$ is an absolute winning set with the parameter $\beta$.  
\end{lemma}
\begin{proof}
Let $I_j,j\ge1$ denote the intervals that Bob plays in the (original) absolute game.  
Alice will consider the subsequence $I_{1+rM},r\ge0$ as Bob's moves in the modified game.  
Given $j=1\mod M$ Alice considers the intervals $J_1,\dots,J_M$ she would have played in 
  the modified game in response to Bob's choice of $I_j$.  
The strategy for her next $M$ moves of the original game is to pick 
  $$U_j=J_1, \quad U_{j+1}=J_2\cap I_{j+1}, \quad U_{j+2}=J_3\cap I_{j+2}, 
        \quad \dots \quad U_{j+M-1}=J_M\cap I_{j+M-1}.$$
Observe that for $i=1,\dots,k-1,$ 
  $$|U_{j+k-1}| \le |J_k| \le \beta^M |I_j| \le \beta^{M-i}|I_{j+i}| \le \beta |I_{j+k-1}|$$
  so Alice's choice of $U_{j+k-1}$ in response to $I_{j+k-1}$ is valid.  
Note that $|I_{j+M}|\ge\beta^M|I_j|$ and that $I_{j+M}$ is disjoint from $J_k$ because 
  $I_{j+M}\subset I_{j+k}$ and Bob is required to choose $I_{j+k}$ disjoint from $J_k$ 
  inside $I_{j+k-1}$.  
Thus, $I_{j+M}$ is a valid move for Bob in the modified game so that Alice can 
  continue her next $M$ moves by repeating the strategy just described.  

Since the intervals $I_j$ are nested, we have 
  $$\bigcap_{j=1}^\infty I_j = \bigcap_{r=1}^\infty I_{1+rM},$$ 
  which has nontrivial intersection with $E$, by hypothesis.  
\end{proof}

We remark that there is an obvious partial converse: \emph{if Alice can win the absolute 
game then she can also win the modified game with the same parameter}.  Indeed, she 
simply picks all her $M$ intervals to be the same as the interval she would have chosen 
in the original, absolute game.

\subsection{Case of badly approximable numbers}
Recall that a real number $\theta$ is badly approximable if there exists $c>0$ such that for all rationals $p/q\in\Q$  $$ \left|\theta - \frac{p}{q} \right| > \frac{c}{q^2}.$$  
The fact that the set of badly approximable numbers is absolute winning is a special case of Theorem 1.3 of \cite{McM}.  We give a proof of this result because it serves as a motivation for the proof of Theorem~\ref{main}.  
\begin{thm}\label{thm:BA}
The set of badly approximable real numbers is absolute winning.  
\end{thm}
\begin{proof}
Fix $\eps>0$.  
Given an interval $B_j$ chosen by Bob, let $I_j$ be an $\eps|B_j|$-neighborhood 
of $B_j$ and let $p_j/q_j$ be the rational of smallest denominator (in lowest terms) 
in the interval $I_j$.  Alice's strategy is to "block $p_j/q_j$"; in other words, she picks 
$$A_j = \left(\frac{p_j}{q_j}-\frac{\beta|B_j|}{2},\frac{p_j}{q_j}+\frac{\beta|B_j|}{2}\right).$$

We claim that there exists $c>0$ such that $|I_j|q_j^2 >c$ for all $j\ge1$.  
Indeed, if $q_{j+1}<\beta^{-1}q_j$ then 
\begin{equation}\label{ieq:sep}
  |I_j| > \left|\frac{p_j}{q_j} - \frac{p_{j+1}}{q_{j+1}}\right| \ge \frac{1}{q_jq_{j+1}} 
            > \frac{\beta}{q_j^2}
\end{equation}
   whereas if $q_{j+1}\ge\beta^{-1}q_j$ we have 
  $$|I_{j+1}|q_{j+1}^2 \ge \beta|I_j|q_{j+1}^2 \ge \beta^{-1}|I_j|q_j^2.$$  
Now whenever  the quantity $|I_j|q_j^2$ is less than $\beta$, the above inequality says it  must increase by a factor 
of at least $\beta^{-1}$ at each step until it exceeds $\beta$, after which it may decrease 
by a factor of at most $\beta$ (since $|I_{j+1}|\ge\beta|I_j|$ and $q_{j+1}\ge q_j$) 
and then exceed $\beta$ again at the following step.  
Hence, $\liminf |I_j|q_j^2>\beta^2$ and the claim follows. 

Given $x\in\cap B_j$ and $p/q\in\Q$, suppose first that $p/q\not\in I_1$.  
Then $$\left|x-\frac{p}{q}\right| \ge \eps|B_1| \ge \frac{\eps|B_1|}{q^2}.$$  
Thus assume $p/q\in I_1$. Since our strategy guarantees that $p/q\notin \cap I_j$,  
there is a (unique) index $j$ such that $p/q\in I_j\setminus I_{j+1}$ 
  and $q_j\le q$ (because $p/q\in I_j$) and since $p/q\not\in I_{j+1}$ we have 
  $$\left|x-\frac{p}{q}\right| \ge \eps|B_{j+1}| \ge \beta\eps|B_j| > \frac{c\beta\eps|I_j|}{1+2\eps}>\frac{c\beta\eps}{(1+2\eps)q_j^2} \ge \frac{c\beta\eps}{(1+2\eps)q^2}$$  
proving $x$ is badly approximable.  
\end{proof}

\subsection{Sketch of the proofs of Theorem~\ref{main} and Theorem~\ref{thm:fullspace}}
In the game played with quadratic differentials on a higher genus surface, 
we have a similar criterion as for the torus, given by Proposition~\ref{equiv:cond} 
that for a Teichm\"uller geodesic to lie in a compact set in the stratum, 
the direction is far from the direction of a saddle connection.  This says that 
in order to show this set is winning we want to find a strategy giving us a point 
far from the direction of any saddle connection. Unlike the genus one case 
we have the  major complication that directions of saddle connections in general 
need not be separated in the sense that the angle between them need {\em not} 
be at least a constant over the product of their lengths as it is in the case of tori.  
Equivalently, it may happen that on some flat surfaces there are many intersecting 
short saddle connections.  This forces us to consider {\em complexes} of saddle 
connections that become simultaneously short under the geodesic flow.  
We call these complexes {\em shrinkable}.  

An important tool is a process of combining a pair of  shrinkable complexes of 
a certain level or complexity  to build a  shrinkable complex of higher level.   This 
is given by Lemma~\ref{lem:combine} with the preliminary Lemma~\ref{lem:sigma}.  
These ideas are not really new; having appeared in several papers beginning with 
\cite{KMS}. 

The main point in this paper and the strategy is given by Theorem~\ref{blocking strategy}.  
We show first that complexes of highest level are separated, as in the case of the 
torus, for otherwise we could combine them to build a complex of higher level that 
is shrinkable.  This is impossible by definition of highest level.  We develop a strategy 
for Alice where, as in the torus case, she blocks these highest level complexes.  
Then we consider complexes of one lower level that lie in the complement of 
the interval used to block highest level intervals and which are not too long in 
a certain sense depending on the stage of the game.  We show that these are 
separated as well, for if not, we could combine them into a highest level complex 
of bounded size and these have supposedly been blocked at an earlier stage of 
the game.  Thus there can be at most one such lower level complex (up to a certain 
combinatorial equivalence) and we block it.  We continue this process inductively 
considering complexes of decreasing level one step at a time, ending by blocking 
single saddle connections.  Then after a fixed number of steps we return to blocking 
highest level complexes and so forth.  From this strategy, Theorem~\ref{main} 
will follow.  For technical reasons, we need to block complexes by intervals whose 
length is comparable to the reciprocal of the product of their longest saddle connection 
and the longest saddle connection on their boundary.  

In adapting the argument to the proof of Theorem~\ref{thm:fullspace}, we need 
to consider complexes on distinct flat surfaces.  In order to combine them so that 
Lemma~\ref{lem:combine} can be applied, we need to consider the problem of 
moving a complex on one surface to a nearby one.  (See Theorem~\ref{thm:move}.) 
This operation is not canonical since unlike parallel transport, it does not respect 
the operation of concatenation along paths.   However it does preserve inclusion of 
complexes (Proposition~\ref{prop:inclusion}) and this is sufficient for our purposes.  
While the basic strategy is the same as that in the proof of Theorem~\ref{main}, 
we caution the reader that unlike the ordinary and strong winning games, after Alice chooses $A_j$, the game "continues" in $B_j\setminus A_j$ rather than inside $A_j$.  
In particular, we do not have an analog of the simultaneous blocking strategy 
Lemma~\ref{lem:simblock}.

\section{Quadratic differentials, complexes, and geodesic flow}\label{S:qd}
\subsection{Quadratic differentials}
A general reference here is \cite{MaT}.
Recall a holomorphic quadratic differential $q=\phi(z)dz^2$ on a Riemann surface $X$ 
of genus $g>1$ defines for each local holomorphic coordinate $z$, a holomorphic 
function $\phi_z(z)$ such that in overlapping coordinate neighborhoods $w=w(z)$ 
we have $$\phi_w(w)\left(\frac{dw}{dz}\right)^2=\phi_z(z).$$  
On a compact surface, $q$ has a finite set $\Sigma$ of zeroes.  On the complement 
of $\Sigma$ there are {\em natural} local coordinates $z$ such that $\phi_z(z)\equiv 1$ 
and hence $q$ defines a flat surface.  A zero of order $k$ defines a cone singularity 
of angle $(k+2)\pi$.  Suppose $q$ has zeroes of orders $k_1,\ldots, k_p$ with $\sum k_i=4g-4$.  There is a moduli space or stratum $Q=Q(k_1,\ldots, k_p,\pm)$ of 
quadratic differentials all of which have zeroes of orders $k_i$.  The $+$ sign occurs 
if $q$ is the square of an Abelian differential and the $-$ sign otherwise. 

A quadratic differential $q$ defines an area form $|\phi(z)||dz^2|$ and a metric 
$|\phi|^{1/2}|dz|$.  We assume that our quadratic differentials have area one.  
Recall a {\em saddle connection} is a geodesic in the metric joining a pair of 
zeroes which has no zeroes in its interior. 
By the {\em systole} of $q$ we mean the length of the shortest saddle connection. 

A choice of a branch of $\phi^{1/2}(z)$ along a saddle connection $\beta$ and 
an orientation of $\beta$ determines a holonomy vector 
  $$hol(\beta)=\int_\beta \phi^{1/2}dz\in\C.$$  It is defined up to sign.  
Thinking of this as a vector in $\mathbb{R}^2$ gives us the horizontal and vertical 
components defined up to sign.  We will denote by $h(\gamma)$ and $v(\gamma)$ 
the absolute value of these components.  We will denote its length $|\gamma|$ as 
the maximum of $h(\gamma)$ and $v(\gamma)$.  This slightly different definition 
will cause no difficulties in the sequel.  

Given $\epsilon>0$, let $Q^1_{\epsilon}$ denote the compact set of unit area 
quadratic differentials in the stratum such that the shortest saddle connection 
has length at least $\epsilon$.  The group $SL(2,\mathbb{R})$ acts on $Q^1$ 
and on saddle connections.  (In the action we will suppress the underlying 
Riemann surface).  Let  
  $$g_t=\left( \begin{array}{cc}e^{t} & 0\\0& e^{-t}\end{array}\right)$$ 
  denote the Teichm\"uller flow acting on $Q^1$ and 
  $$r_\theta=\left(\begin{array}{cc} \cos\theta&\sin\theta\\-\sin\theta &\cos\theta
       \end{array}\right)$$ denote the rotation subgroup. 

The Teichm\"uller flow acts by expanding the horizontal component of saddle 
connections by a factor of $e^t$ and contracting the vertical components by $e^t$.  
For $\sigma$ a saddle connection we will also use the notation $g_tr_\theta \sigma$ 
for the action on saddle connections.  The action of $SL(2,\R)$ is linear on 
holonomy of saddle connections. 

\begin{definition}
We say a direction $\theta$ is \emph{bounded} if there exists $\epsilon$ 
such that $g_tr_\theta q\in Q^1_\epsilon$ for all $t\geq 0$.  
\end{definition}

\subsection{Conditions for $\beta$-absolute winning}
\begin{definition}
Given a saddle connection $\gamma$ on $q$ we denote by $\theta_\gamma$ 
the angle such that $\gamma$ is vertical with respect to $r_{\theta_\gamma}q$. 
\end{definition}

We can think of the set of saddle connections as a subset of $S^1\times\R$ 
by associating to each $\gamma$ the pair $(\theta_\gamma,|\gamma|)$.
The following proposition gives the equivalence of Theorem~\ref{main} and 
Theorem~\ref{thm:equiv} and will be the motivation for what follows. 

\begin{prop}\label{equiv:cond} 
Let $$S=\{(\theta,L): \theta 
  \text{ is the vertical direction of a saddle connection of length } L\}.$$ 
Then $\underset{(\theta,L)\in S}{\inf}L^2|\theta-\psi|>0$, if and only if 
  $\psi$ determines a bounded direction.
\end{prop}
\begin{proof}
If $(\theta,L)\in S$, let $c=|\theta-\psi|$.  We can assume $c\le\frac{\pi}{4}$.  
Then the length of the saddle connection in $g_tr_{\psi}q$ coming from $(\theta,L)$ 
is $\max\{e^t\sin(c)L,e^{-t}L\cos(c)\}$ and minimized in $t$ when equality of 
the two terms holds; that is when $e^{-t}= \sqrt{\tan(c)}$.  At this time the length is 
$$L\sqrt {\sin(c)\cos(c)}=L\sqrt{\frac{\sin(2c)}{2}}\geq L\sqrt{\frac{c}{2}}.$$  
So if $c>\frac{\delta}{L^2}$, for some $\delta>0$, then 
  $$\max\{e^t\sin(c)L,e^{-t}L\cos(c)\}>\sqrt{\frac{\delta}{2}}.$$
Conversely, if the minimum length $L\sqrt{\frac{\sin(2c)}{2}}$ is bounded below 
by some $\ell_0$ then the difference in angles $c$ satisfies 
  $$2c\geq \sin(2c)\geq\frac{2\ell_0^2}{L^2}.$$
\end{proof}

\subsection{Complexes}
In this section we fix a quadratic differential $q$.  Let $\Gamma$ be a collection of 
saddle connections of $q$, any two of which are disjoint except possibly at a common zero.  
Let $K$ be the simplicial complex having $\Gamma$ as its set of $1$-simplices and 
whose $2$-simplices consist of all triangles that have all three edges in $\Gamma$.  
By a \emph{complex} we mean any simplicial complex that arises in this manner.  
We shall also use the same term to mean the closed subset $K$ of the surface given by 
the union of all simplices; in this case, we call $\Gamma$ a \emph{triangulation} of $K$.  
We shall often leave it to the context to determine which sense of the term is intended.  
For example, ``a saddle connection in $K$" refers to an element of $\Gamma$, 
whereas ``the interior of $K$" refers to the largest open subset contained in $K$, 
which may be empty.  An edge $e$ is in the topological boundary $\partial K$ of $K$ 
if any neighborhood of an interior point of $e$ intersects the complement of $K$.  
We note that the boundary of a complex may fail to satisfy the requirement that 
a triangle with edges in the complex is also included in the complex, as happens 
when $K$ is simply a triangle.  

We distinguish between \emph{internal} saddle connections in $\partial K$, which 
lie on the boundary of a $2$-simplex in $K$ and \emph{external} ones, which do not.  
Note that a triangulation $\Gamma$ of $K$ may contain both internal and external 
saddle connections.  The remaining saddle connections are on the boundary of 
two $2$-simplices, and we refer to them as \emph{interior} saddle connections, 
which, of course, depend on the choice of $\Gamma$.  Each internal saddle 
connection comes with a \emph{transverse orientation}, which is determined by 
the choice of an inward normal vector at any interior point of the segment.  
The interior of $K$ is determined by the data consisting of $\partial K$, 
the subdivision into internal and external saddle connections, together with 
the choice of transverse orientation for each internal saddle connection.  
Simplicial homeomorphisms respect these notions in the obvious sense, 
while simplicial maps generally do not.  

\begin{definition}
The {\em level} of a complex is the number of edges in any triangulation.
\end{definition}

An easy Euler characteristic argument says that the level $M$ is well defined 
and is bounded by $6g-6+3n$, where $n$ is the number of zeroes. 

We say two complexes are \emph{topologically equivalent} if they determine the 
same closed subset of the surface.  Otherwise, they are \emph{topologically distinct}.  
\begin{lemma}\label{lem:distinct}
If $K_1$ and $K_2$ are topologically distinct complexes of the same level, 
then any triangulation of $K_2$ contains a saddle connection $\gamma\in K_2$ 
that intersects the exterior of $K_1$, i.e. $\gamma\not\subset K_1$.  
\end{lemma}
\begin{proof}
Arguing by contradiction, we suppose that the conclusion does not hold.  
Then there is a triangulation of $K_2$ such that every edge is contained in $K_1$.  
It would follow that $K_2\subset K_1$, and properly so, since they are topologically 
distinct.  By repeatedly adding saddle connections $\sigma\subset K_1$ that are disjoint 
from those in $K_2$, we can extend the triangulation of $K_2$ to one of $K_1$ to obtain 
one where the number of edges is strictly greater than the level of $K_1$.  This contradicts 
the fact that any two triangulations of a complex contains the same number of edges.  
\end{proof}

A \emph{path in $\Gamma$} refers to a sequence of edges in $\Gamma$ such that 
the terminal endpoint of the previous edge coincides with the initial endpoint of 
the next edge.  We may also think of it as a map of the unit interval into $X$.  
The \emph{combinatorial length} of a path in $\Gamma$ refers to the number 
of edges in the sequence, including repetitions.  For a homotopy class of paths 
with endpoints fixed at the zeroes of $q$ we define the \emph{combinatorial length} 
to be the minimum combinatorial length of a path in $\Gamma$ in the homotopy class.  
We denote the combinatorial length of a saddle connection by $|\gamma|_\Gamma$. 

To show that combinatorial and flat lengths are comparable, we first need a lemma.
\begin{lemma}\label{lem:iet}
For any saddle connection $\sigma$ there is $\delta=\delta(\sigma)>0$ such that 
the length of a geodesic segment with endpoints in $\sigma$ but otherwise not 
contained in $\sigma$ is at least $\delta$.  
\end{lemma}
\begin{proof}
For any small $\delta>0$ take the $\delta$ neighborhood of $\sigma$.  This is simply connected if $\sigma$ has distinct endpoints and is an annulus if the endpoints coincide. Then any geodesic starting and ending on $\sigma$ must leave the neighborhood; otherwise the geodesic and a segment of $\sigma$ would bound a disc, which is impossible. 
\end{proof}

\begin{definition}
Given $q$, let $L_0=L_0(q)$ denote the systole and 
for $\Gamma$ a triangulation of a complex $K$ let $L_1=L_1(\Gamma,q)$ 
denote the length of the longest edge of $\Gamma$. 
\end{definition}
\begin{lemma}\label{lem:comparable}
Let $\delta=\delta(\Gamma)$ denote the minimum of the constants given by 
Lemma~\ref{lem:iet} associated to each saddle connection in $\Gamma$.  
There are constants $\lambda_2>\lambda_1>0$ depending on $L_0,L_1$ 
and $\delta$ such that for any saddle connection $\gamma\subset\Gamma$, 
  $\lambda_1|\gamma|\le |\gamma|_\Gamma\le \lambda_2|\gamma|$.  
\end{lemma}
\begin{proof}
Since $\gamma$ is the geodesic in its homotopy class, its length is bounded 
  above by $L_1|\gamma|_\Gamma$.  Hence, we may take $\lambda_1=1/L_1$.  
For the other inequality, 
$$|\gamma|_\Gamma\le N+2$$ where $N$ is the number of times $\gamma$ 
  crosses a saddle connection in $\Gamma$.  Since one of these saddle connections 
  is crossed at least $[N/e]$ times, where $e$ is the number of elements in $\Gamma$, 
  we have $$|\gamma|\ge([N/e]-1)\delta.$$  
First, if $N\ge4e$ then $|\gamma| \ge \frac{N}{2e}\delta \ge 2\delta$ so that 
  $$|\gamma|_\Gamma \le \frac{2e}{\delta}|\gamma| + 2 \le \frac{2e+1}{\delta}|\gamma|.$$  
On the other hand, if $N<4e$ then $|\gamma|_\Gamma<4e+2$ whereas $|\gamma|$ 
  is bounded below by the  $L_0$.  Hence, the lemma holds with 
  $$\lambda_2=\max\left(\frac{2e+1}{\delta},\frac{4e+2}{L_0}\right).$$
\end{proof}

\begin{lemma}\label{lem:small}
There exists $\epsilon_0$ such that a $3\epsilon_0$-complex must have strictly 
fewer than $6g-6+3n$ saddle connections.  
\end{lemma}
\begin{proof}
We can triangulate the surface by disjoint saddle connections.  If the surface can 
be triangulated by edges of length $\epsilon_0$ then there is a bound in terms of 
$\epsilon_0$ for the area.  However we are assuming that the area of $q$ is one.
\end{proof}

\begin{lemma}
Given $q$, there is a number $M<6g-6+3n$ such that for any $\epsilon\leq\epsilon_0$, 
$M$ is the maximum level of any $\epsilon$-complex for any $g_tr_\theta q$.  
\end{lemma}

The following will be applied to complexes on the surface $g_tr_\theta q$ for 
  some suitable choice of $t$ and $\theta$. 
\begin{lemma}\label{lem:sigma}
Let $K$ be an $\eps$-complex and $\gamma\not\subset K$, i.e. a saddle connection 
that intersects the exterior of $K$.  Then there exists a complex $K'=K\cup\{\sigma\}$ 
formed by adding a disjoint saddle connection $\sigma$ satisfying 
$h(\sigma) \le h(\gamma) + 3\eps$ and $v(\sigma) \le v(\gamma) + 3\eps$.  
\end{lemma}
\begin{proof}
We have that $\gamma$ must be either disjoint from $K$ or cross the boundary of $K$.

{\bf Case I}.
 $\gamma$ is disjoint from $K$.
Add $\gamma$ to $K$ to form $\tilde K$. 
It is clear that the estimate on lengths  holds.

{\bf Case II}
 $\gamma$ intersects $\partial K$  crossing  $\beta \subset \partial K$ at a point $p$ 
dividing $\beta$ into segments $\beta_1,\beta_2$.  

{\bf Case IIa}  One endpoint $p'$ of $\gamma$ lies in the exterior of $K$.  
Let $\hat \gamma$ be the segment of $\gamma$  that goes from  $p'$ to $p$.  We consider the homotopy class of paths $\hat\gamma*\beta_i$ which is the segment $\hat\gamma$ followed by $\beta_i$.  Together with $\beta$ they bound a simply connected domain $\Delta$.  Replace each  path by the geodesic $\omega_i$ joining the endpoints in the homotopy class.  
 Then $\partial \Delta$ is made up of at most $M$ saddle connections $\sigma$ all of which have their horizontal and vertical lengths bounded by the sum of the horizontal and vertical lengths of $\gamma$ and $\beta$.  
If some $\sigma\notin K$ we add it to form $K'$.  
It is clear that the estimate on lengths  holds. 

The other possibility is that $\Delta\subset \partial K$. 
It cannot be the case that $\Delta$ is a triangle, since then $\Delta$ would be 
a subset of $K$, contradicting the assumption on $\gamma$.   
Since the edges of $\Delta$ all have length at most $\eps$ we can find 
a diagonal $\sigma$ in $\Delta$ of length at most $2\eps$ and add it to form $K'$.
See Figure~1.  
\begin{center}
\begin{figure}\label{fig:IIa}
\includegraphics{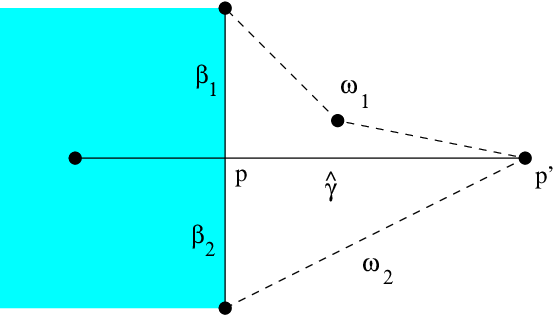}
\caption{Case IIa.}
\end{figure}
\end{center}

{\bf Case IIb} 
Both endpoints of $\gamma$ lie in $K$. 
Let $\gamma$ successively cross $\partial K$ at $p_1,p_2$, and let $\gamma_1$ be 
the segment of $\gamma$ lying in  the exterior of $K$ between $p_1$ and $p_2$.  

The first  case  is where $p_1,p_2$ lie on different $\beta_1,\beta_2$ which have  
endpoints $q_1^1,q_1^2$ and $q_2^1,q_2^2$.  Then $p_1,p_2$  divide $\beta_i$ 
into segments $\beta_i',\beta_i''$.  We can form a homotopy class $\beta_1'*\gamma_1*\beta_2'$ joining $q_1^1$ to $q_2^2$ and a homotopy class $\beta_1''*\gamma_1*\beta_2''$ joining $q_1^2$ to $q_2^1$. cWe replace these with their geodesics with 
the same endpoints and then together with $\beta_1,\beta_2$ they bound a simply 
connected domain.  We are then in a situation similar to Case IIa. 
See Figure~2.  
\begin{center}
\begin{figure}\label{fig:IIbi}
\includegraphics{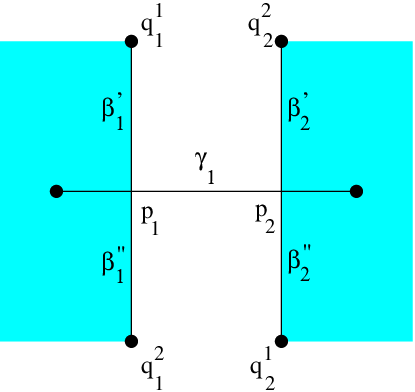}
\caption{First subcase of Case IIb.}
\end{figure}
\end{center}

The last case is that $p_1,p_2$ lie on the same  saddle connections $\beta$ 
of $\partial K$.  Let $\hat\beta$ be the segment between $p_2$ and $p_1$.  
Let $\beta_1$ and $\beta_2$ be the segments joining the endpoints $q_1,q_2$ 
of $\beta$ to $p_1,p_2$.  Find the geodesic in the homotopy class of 
$\beta_1*\gamma_1*\beta_2$ joining $q_1$ to $q_2$ and the geodesic 
in the class of the loop $\beta_1*\gamma_1*\beta_2*\beta^{-1}$ from $q_1$ to itself.  
These two geodesics together with $\beta$  bound a simply connected domain.  
The analysis is similar to the previous cases.  
See Figure~3.  
\begin{center}
\begin{figure}\label{fig:IIbii}
\includegraphics{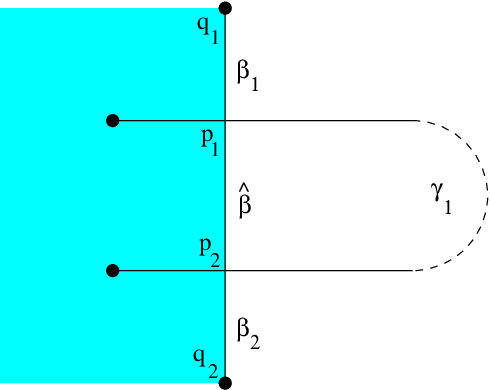}
\caption{Second subcase of Case IIb.}
\end{figure}
\end{center}
\end{proof}

Now fix the base surface $q$.  
All angles and lengths will be measured on the base surface.  
We shall often let $K$ denote a complex equipped with a triangulation 
without explicit mention the choice of triangulation, as in the 
statement and proof of Lemma~\ref{lem:sigma}.  
\begin{definition}
Denote by $L(K)$ the length of the longest saddle connection in $K$.
Let $\theta(K)$ the angle that makes the longest saddle connection vertical. 
\end{definition}

We assume that  the complexes considered now have the property that for any saddle 
connection $\gamma \in K$ we have $|\theta_{\gamma}-\theta(K)|\leq \frac{\pi}{4}$.  
This implies that measured with respect to the angle $\theta(K)$ we have 
$|\gamma|=v(\gamma)$.  In other words the vertical component is larger than 
the horizontal component.  This will exclude at most finitely many complexes from 
our game and these will be excluded in any case by our choice of $c_{M+1}$ in 
Theorem~\ref{blocking strategy}.  

\begin{definition}  We say a complex $K$ is \emph{$\epsilon$-shrinkable} if for all saddle connections $\beta$ of $K$ if we let $h(\beta)$ be the component of the holonomy vector 
in the direction perpendicular to the direction $\theta(K)$ of the longest saddle connection, 
then $h(\beta)\leq \frac{\epsilon^2}{L(K)}$. 
\end{definition}

We note that this condition could equally well be stated as follows. 
For any  saddle connection $\beta$ of $K$ we have 
  $|\theta_\beta-\theta(K)|\leq \frac{\epsilon^2}{|\beta|L(K)}$.

The following is immediate. 
\begin{lemma}
If $\epsilon_1<\epsilon_2$ and  $K$ is $\epsilon_1$-shrinkable, 
then it is $\epsilon_2$-shrinkable.  
\end{lemma}

\begin{definition}
A complex $K$ and a saddle connection $\gamma$ that intersects the exterior of $K$ 
are \emph{ jointly $\epsilon$-shrinkable} if $K$ is $\epsilon$-shrinkable and 
\begin{itemize}
\item if $|\gamma|\leq L(K)$ then $|\theta(K)-\theta_\gamma|\leq \frac{\epsilon^2}{|\gamma|L(K)}$.
\item if $L(K)<|\gamma|$ then $|\theta_\gamma-\theta_\omega|\leq \frac{\epsilon^2}{|\gamma||\omega|}$ for all $\omega\in K$.
\end{itemize}  
\end{definition}

The next lemma says that if the longest saddle connections of each of two complexes 
have comparable lengths and the angles between these saddle connections is not too 
large, then the complexes can be combined to form another shrinkable complex.  
\begin{lemma}\label{lem:combine}
Let $K_1$ and $K_2$ be $\eps$-shrinkable complexes of level $i$ satisfying 
\begin{equation*}
  \left|\theta(K_1)-\theta(K_2)\right| < \frac{\rho_1\eps^2}{L(K_1)L(K_2)} 
  \quad\text{ and }\quad L(K_1) \le L(K_2) < \rho_2 L(K_1)
\end{equation*}
for some $\rho_1>3$ and $\rho_2>3$ and assume they are topologically distinct.  
Then there 
is an $\eps'$-shrinkable complex $K'$ of one level higher satisfying 
  $L(K') < \rho'_2 L(K_1)$ where 
\begin{equation}\label{def:K'}
  \eps'=(16\rho_1\rho'_2)^{1/2}\eps  \quad\text{ and }\quad 
  \rho'_2 = \sqrt{4\rho_2^2+9\rho_1^2\eps^4/L(K_1)^4}.  
\end{equation}
\end{lemma}
\begin{proof}
By Lemma~\ref{lem:distinct} there is a saddle connection $\gamma\in K_2$ such that
   $\gamma\not\subset K_1$.  Let  $\theta=\theta(K_1)$ and $t=\log(L(K_1)/\eps)$.  Then 
\begin{equation*}
  \left| \theta(K_1) - \theta_\gamma\right| 
      < \frac{\rho_1\eps^2}{L(K_1)L(K_2)} + \frac{\eps^2}{|\gamma| L(K_2)} 
      < \frac{2\rho_1\eps^2}{|\gamma| L(K_1)}
\end{equation*}
so that
\begin{equation*}
  h_\theta(\gamma)\cdot\frac{L(K_1)}{\eps} < 2\rho_1\eps
  \quad\text{ and }\quad 
  v_\theta(\gamma)\cdot\frac{\eps}{L(K_1)} < \rho_2\eps.  
\end{equation*}
We apply Lemma~\ref{lem:sigma} to produce a new saddle connection $\sigma$. On $g_tr_\theta X$ we have $\sigma_{\theta,t}=g_tr_\theta\sigma$ satisfies 
\begin{equation*}
  h(\sigma_{\theta,t}) < (2\rho_1+3)\eps < 3\rho_1\eps
  \quad\text{ and }\quad 
  v(\sigma_{\theta,t}) < (\rho_2+3)\eps < 2\rho_2\eps
\end{equation*}
  which implies that 
\begin{align*}
  |\sigma| &= \sqrt{\left(h(\sigma_{\theta,t})\cdot\frac{\eps}{L(K_1)}\right)^2 
                                + \left(v(\sigma_{\theta,t})\cdot\frac{L(K_1)}{\eps}\right)^2} \\
                  &< \sqrt{(3\rho_1\eps^2/L(K_1))^2+4\rho_2^2L(K_1)^2} = \rho'_2L(K_1) 
\end{align*}
  so that $K'=K\cup\{\sigma\}$ satisfies 
\begin{equation}\label{ieq:K'}
  L(K') = \max( L(K_1),|\sigma| ) < \rho'_2L(K_1).  
\end{equation}
If $|\sigma|<L(K_1)$ we have $K'$ is $\eps'$-shrinkable because 
  $$ \left| \theta(K_1)-\theta_\sigma\right| \leq  2 \frac{ h_{\theta}(\sigma)}{|\sigma|}\leq  \frac{6\rho_1\eps^2}{|\sigma| L(K_1)}$$ 
while if $|\sigma|\ge L(K_1)$ it follows that $K'$ is $\eps'$-shrinkable because $\theta(K')=\theta_\sigma$ 
  and for every $\xi\in K_1$ 
\begin{align*}
  \left| \theta(K')-\theta_\xi \right| 
      &\le \left| \theta_\sigma - \theta(K_1) \right| + \left| \theta(K_1) - \theta_\xi \right| \\
      &< \frac{6\rho_1\eps^2}{|\sigma| L(K_1)} + \frac{\eps^2}{|\xi| L(K_1)} 
      < \frac{6\rho_1\eps^2}{|\xi| L(K_1)} 
      < \frac{6\rho_1\rho'_2\eps^2}{|\xi| L(K')}
\end{align*}
  where $|\sigma|\ge L(K_1)\ge|\xi|$ and (\ref{ieq:K'}) were used in last two inequalities.  
\end{proof}

The following symmetric version allows us to bypass Lemma~\ref{lem:distinct}.  
\begin{lemma}\label{lem:combine2}
Let $K_1$ and $K_2$ be $\eps$-shrinkable complexes of level $i$ satisfying 
\begin{equation*}
  \left|\theta(K_1)-\theta(K_2)\right| < \frac{\rho_1\eps^2}{L(K_1)L(K_2)} 
  \quad\text{ and }\quad \rho_2^{-1}L(K_1) \le L(K_2) < \rho_2 L(K_1)
\end{equation*}
for some $\rho_1>3$ and $\rho_2>3$ and assume they are topologically distinct.  
Then there is an $\eps'$-shrinkable complex $K'$ of one level higher satisfying 
  $L(K') < \rho'_2 L(K_1)$ where 
\begin{equation}\label{def:K'2}
  \eps'=(8\rho_*\rho'_2)^{1/2}\eps, \quad 
  \rho'_2 = \sqrt{4\rho_2^2+9\rho_*^2\eps^4/L(K_1)^4} \quad\text{ and }\quad 
  \rho_*=\max(\rho_1,\rho_2).  
\end{equation}
\end{lemma}
\begin{proof}
The only place where $L(K_1)\le L(K_2)$ was used in the previous proof was 
  at the last inequality in the first displayed line.  The entire proof goes through 
  if every occurrence of $\rho_1$ is replaced with $\rho_*$.  
\end{proof}

In what follows we will be considering shrinkable complexes.  In each combinatorial equivalence class of such shrinkable complexes we will consider the complex $K$ which minimizes $L(\cdot)$ and the corresponding angle $\theta(\cdot)$.  
We note that this complex is perhaps not unique and so there is ambiguity in $\theta(K)$
but this will not matter. 

We let $L(\partial K)$ denote the length of the longest saddle connection on the boundary of $K$. 
We also let $\theta(\partial K)$ the angle that makes  the longest vertical.

\section{Proofs of Theorem~\ref{main} and Theorem~\ref{blocking strategy}}
We are now ready to begin the proof of Theorem~\ref{main}.  
It is based on Theorem~\ref{blocking strategy} whose  statement and proof were suggested in the outline.  

\begin{thm}\label{blocking strategy}  For  all $\beta$ sufficiently small  and given  Bob's first move $I_1$ in the game, there exist  positive constants $c_i$, $i=1,\ldots, M+1$, and  a strategy for Alice  
such that regardless of the choices $I_j$ made by Bob, the following will hold.   For all   
$\beta c_i$-shrinkable level $i$-complexes $K$ if 
$$L(K)L(\partial K)|I_j|< c_i^2,$$ then 
$$d(\theta(K),I_j)>\frac{\beta c_i^2}{4L(K)L(\partial K)}.$$ 
\end{thm}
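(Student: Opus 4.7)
The plan is to prove this by induction on the rounds $j$ and to construct Alice's strategy inside the modified absolute winning game, where she may remove $M$ disjoint subintervals per round of length at most $\beta^M|I_j|$. The preceding lemma then converts such a strategy into one for the original absolute game. The constants $c_{M+1}>c_M>\cdots>c_1>0$ are chosen recursively from the top down: one starts with $c_{M+1}$ arbitrary, relying on Lemma~\ref{lem:small} to rule out complexes of level above $M$, and defines $c_i$ from $c_{i+1}$ so that whenever Lemma~\ref{lem:combine} (or the symmetric Lemma~\ref{lem:combine2}) is applied to a pair of $\beta c_i$-shrinkable level-$i$ complexes whose angles and length parameters satisfy the proximity arising from the containment $\theta(K)\in I_j$ and $L(K)L(\partial K)|I_j|<c_i^2$, the output is a $\beta c_{i+1}$-shrinkable level-$(i+1)$ complex $K'$ whose own parameters still verify $L(K')L(\partial K')|I_j|<c_{i+1}^2$. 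The factor $\epsilon'=(16\rho_1\rho'_2)^{1/2}\epsilon$ from Lemma~\ref{lem:combine} forces $c_i$ to be small compared to $c_{i+1}$ by roughly this same factor.

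Alice's strategy at round $j$ of the modified game will consist of $M$ blocking intervals, one per level $i\in\{1,\ldots,M\}$. At level $i$, Alice examines the $\beta c_i$-shrinkable level-$i$ complexes $K$ (choosing the representative minimizing $L(K)$ in each topological equivalence class) for which $L(K)L(\partial K)|I_j|<c_i^2$ and $\theta(K)\in I_j$. The key claim is that there is at most one such $K$ up to topological equivalence. The argument is by contradiction: two topologically distinct candidates $K_1,K_2$ would meet the hypotheses of Lemma~\ref{lem:combine} or Lemma~\ref{lem:combine2} with $\rho_1,\rho_2$ extracted from the bounds $|\theta(K_1)-\theta(K_2)|<|I_j|$ and $L(K_\ell)L(\partial K_\ell)|I_j|<c_i^2$, yielding a $\beta c_{i+1}$-shrinkable level-$(i+1)$ complex $K'$ with $\theta(K')$ inside $I_j$ and $L(K')L(\partial K')|I_j|<c_{i+1}^2$. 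This contradicts the conclusion already established at level $i+1$ in the same round, or the nonexistence of level-$(M+1)$ complexes when $i=M$.

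Given uniqueness, Alice blocks the open interval of length $\frac{\beta c_i^2}{2L(K)L(\partial K)}$ centered at $\theta(K)$. By the defining inequality $L(K)L(\partial K)|I_j|<c_i^2$ together with a size bound on $c_i$, this is at most $\beta^M|I_j|$, hence a legal block in the modified game. Once blocked, $\theta(K)$ sits at distance at least $\frac{\beta c_i^2}{4L(K)L(\partial K)}$ from every subsequent $I_{j'}\subset I_j$ that avoids the block, delivering the stated conclusion. The blocks at different levels in the same round can be made pairwise disjoint by invoking the uniqueness claim one level up, which separates the relevant angles quantitatively.

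The main obstacle will be the interlocked bookkeeping of the constants. One must simultaneously ensure that the combining estimates of Lemma~\ref{lem:combine} close up across $M$ downward inductive steps, that each of the $M$ blocking intervals fits within the $\beta^M|I_j|$ budget, that the case analysis on whether $L(K_1)$ and $L(K_2)$ are comparable or disparate (handled by choosing between Lemma~\ref{lem:combine} and Lemma~\ref{lem:combine2}) does not disturb the length propagation, and that complexes first becoming dangerous at round $j$ remain blocked for all subsequent $j'>j$. Threading these compatibility conditions through the downward induction on levels is the technical heart of the argument.
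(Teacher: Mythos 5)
Your overall architecture (induction on rounds, $M$ simultaneous blocks via the modified game, top--down calibration of the $c_i$ against Lemma~\ref{lem:combine}, and a combine-to-contradict uniqueness claim) matches the paper, but the strategy you actually describe does not prove the statement. You have Alice react to complexes that already satisfy $L(K)L(\partial K)|I_j|<c_i^2$ with $\theta(K)\in I_j$. This is too late: for such a complex $d(\theta(K),I_j)=0$, so the conclusion of the theorem has already failed at round $j$, and no block placed now can repair that; the strategy must \emph{prevent} this configuration from ever occurring. Moreover your size estimate is backwards: from $L(K)L(\partial K)|I_j|<c_i^2$ one gets $\frac{\beta c_i^2}{2L(K)L(\partial K)}>\frac{\beta}{2}|I_j|$, so the interval you propose to remove is \emph{longer} than $\beta^M|I_j|$, not shorter -- it is not a legal move, and a legal-sized block cannot deliver the required protection radius, which is unbounded relative to $|I_j|$ as $L(K)L(\partial K)$ shrinks. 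The paper resolves exactly this by blocking in the threshold window $c_i^2\le L(K)L(\partial K)|I_j|<\beta^{-1}c_i^2$ (its set $\Omega_i(j)$): there a block of length $\beta|I_j|$ is legal, and at every later round the dichotomy ``already dangerous at $j-1$ (use the inductive hypothesis) versus newly dangerous (hence it lay in $\Omega_i(j-1)$ and was blocked, giving $d(\theta(K),I_j)>\frac{\beta}{4}|I_{j-1}|>\frac{\beta c_i^2}{4L(K)L(\partial K)}$)'' closes the induction on $j$.

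Your uniqueness step also has two gaps. First, Lemma~\ref{lem:combine} (or \ref{lem:combine2}) needs finite $\rho_1,\rho_2$, i.e.\ $L(K_1)L(K_2)\,|\theta(K_1)-\theta(K_2)|\lesssim\eps^2$ and comparability of $L(K_1)$ and $L(K_2)$; the hypotheses you permit yourself ($L(K)L(\partial K)|I_j|<c_i^2$ and angles in $I_j$) do not yield either, because $L(K)$ can be enormous while $L(\partial K)$ is tiny. The paper needs both the lower bound built into $\Omega_i(j)$ and a separate step bounding $L(K)/L(\partial K)$, and that step itself uses the inductive hypothesis applied to the level-one complex formed by the longest boundary saddle connection -- i.e.\ it depends on level~$1$ having been blocked at earlier rounds, not merely on bookkeeping of constants. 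Second, you derive the contradiction from ``the conclusion already established at level $i+1$ in the same round,'' but that statement is not available by a within-round downward induction on level: the distance bound at level $i+1$ at round $j$ is itself a consequence of blocking at \emph{earlier} rounds, so your induction is not well-founded (except at $i=M$, where nonexistence of level-$(M+1)$ complexes via Lemma~\ref{lem:small} does the work). The paper instead verifies $L(K')L(\partial K')|I_{j-1}|<c_{i+1}^2$ and contradicts the inductive hypothesis at the \emph{previous} round $j-1$ at level $i+1$, which is legitimately available.
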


\begin{proof} 
We assume $\beta<1/12$.  (The value $1/12$ is chosen so that some inequalities that appear in the proof are satisfied).   Let $L_0$ denote the length of the shortest saddle connection on $(X,q)$.  
Let $0<c_1<\dots<c_{M+1}$ be given by $$c_i=\beta^{N_i}c_{i+1}$$ and 
    $$c_{M+1} = \min( L_0\beta^{N_M},L_0|I_1|^{1/2}\eps_0 ),$$ 
  where $N_i$ are defined recursively by $N_1=6$ and 
  $N_{i+1} = 6 + 2(N_1+\dots+N_i)$.  (Again the value $6$ is chosen only so that a particular inequality is satisfied) . 
These $N_i$ are chosen so that 
\begin{equation}\label{def:c_i}
  \frac{c_{i+1}}{c_i} = \beta^{-6}\left(\frac{c_i}{c_1}\right)^2.  
\end{equation}
We remark that this last equation will be used in Step $2$ of the proof. All that is needed is an inequality, but to simplify matters we present it as an equality. 

Let $\cE_i$ be the set of all marked $\beta c_i$-shrinkable complexes of level $i$.  
Given $I_j$, we let 
\begin{equation*}
  \cA_i(j) := \left\{K\in\cE_i : d(\theta(K),I_j) > \frac{\beta c_i^2}{4L(K)L(\partial K)} \right\} 
\end{equation*}
and 
\begin{equation*}
  \Omega_i(j) := \left\{ K\in\cE_i\setminus\cA_i(j) : 
     \frac{c_i^2}{L(K)L(\partial K)} \le |I_j|< \frac{\beta^{-1}c_i^2}{L(K)L(\partial K)} \right\} 
\end{equation*}
and
\begin{equation*}
  z_i(j) := \frac{\inf \Theta_i(j) + \sup \Theta_i(j)}{2} 
     \quad\text{ where }\quad \Theta_i(j)=\{\theta(K):K\in\Omega_i(j) \}.  
\end{equation*}
Alice chooses $M$ intervals of length $\beta|I_j|$ centered at the points $z_i(j),\; i=1,\dots,M$.  

The restatement of theorem then is that for every $j\ge1$:  
\begin{align*}
\tag{$P_j$} \forall i\in\{1,\dots,M\} \; \forall K\in\cE_i \qquad
                                 L(K)L(\partial K)|I_j| < c_i^2 \;\;\Longrightarrow\;\; K\in \cA_i(j)
\end{align*}
Note that ($P_j$) holds for all $j<j_0:=\min\{k: |I_k|<\beta^{2N_M}\}$ because 
  $$L(K)L(\partial K)|I_j| \ge L_0^2\beta^{2N_M} \ge c_{M+1}^2 > c_i^2$$ 
  while if $j_0=1$, we note that $L(K)L(\partial K)|I_1|\ge L_0^2|I_1| \ge c_{M+1}^2>c_i^2.$  
  
We proceed by induction and suppose that $j\ge j_0$ and that ($P_{j-1}$) holds.  

\textit{Step 1.} For any $K\in\Omega_i(j)$, 
\begin{equation}\label{eq:boundary}
  \frac{L(K)}{L(\partial K)} < \beta^{-2}\left(\frac{c_i}{c_1}\right)^2.  
\end{equation}
Consider first  the case 
\begin{equation}
\label{eq:case1}
L(\partial K)^2|I_j| < \beta c_1^2.
\end{equation}  
Then $L(\partial K)^2|I_{j-1}| < c_1^2$ so that ($P_{j-1}$) implies the longest saddle 
connection on $\partial K$ belongs to $A_1(j-1)$, meaning 
\begin{equation}
\label{eq:distancebigger}  d(\theta(\partial K),I_{j-1}) > \frac{\beta c_1^2}{4L(\partial K)^2}.  
\end{equation}
But since $K$ is $\beta c_i$-shrinkable and not in $\cA_i(j)$, the triangle inequality implies 
\begin{align*}
  d(\theta(\partial K),I_{j-1}) &\le \left| \theta(\partial K) - \theta(K) \right| + d(\theta(K),I_j) \\
             &\le \frac{\beta^2c_i^2}{L(K)L(\partial K)} + \frac{\beta c_i^2}{4L(K)L(\partial K)} 
             \le \frac{\beta c_i^2}{L(K)L(\partial K)}
\end{align*}
which together with (\ref{eq:distancebigger}) and the fact that $\beta<\frac{1}{3}$ implies (\ref{eq:boundary}).  Suppose then that (\ref{eq:case1}) does not hold.  Then we have 
\begin{equation*}
  \frac{L(K)}{L(\partial K)} = \frac{L(K)L(\partial K)|I_j|}{L(\partial K)^2|I_j|} 
                                             < \frac{\beta^{-1}c_i^2}{\beta c_1^2}
\end{equation*}
so that (\ref{eq:boundary}) holds in this case as well.

\textit{Step 2.} Any pair $K_1,K_2\in\Omega_i(j)$ are topologically equivalent.  \\
For any $K_1,K_2\in\Omega_i(j)$, we have 
\begin{equation}\label{ieq:ratio}
  \frac{L(K_2)}{L(\partial K_1)} < \beta^{-1}\frac{L(K_1)}{L(\partial K_2)}.  
\end{equation}
Multiplying (\ref{ieq:ratio}) by $L(K_2)/L(\partial K_1)$ and invoking (\ref{eq:boundary}), 
  we get 
\begin{equation*}
  \frac{L(K_2)}{L(\partial K_1)} < \beta^{-5/2}\left(\frac{c_i}{c_1}\right)^2 
\end{equation*}
  so that, exploiting $2<\beta^{-1/2}$, and the above inequality we have 
\begin{equation*}
  \left| \theta(K_1) - \theta(K_2) \right| \le 2|I_j| < \frac{2\beta^{-1}c_i^2}{L(K_1)L(\partial K_1)} 
     < \frac{\beta^{-4}(c_i/c_1)^2c_i^2}{L(K_1)L(K_2)}.  
\end{equation*}
On the other hand again multiplying (\ref{ieq:ratio}) by $L(K_2)/L(K_1)$ and applying (\ref{eq:boundary}), we obtain 
\begin{equation*}
  \left(\frac{L(K_2)}{L(K_1)}\right)^2 < \beta^{-1}\frac{L(K_2)L(\partial K_1)}{L(\partial K_2)L(K_1)} 
       < \beta^{-3}\left(\frac{c_i}{c_1}\right)^2.  
\end{equation*}

Now suppose $K_1$ and $K_2$ are topologically distinct.  We shall derive a contradiction.  
Without loss of generality, we may assume $L(K_1)\le L(K_2)$.  
The hypotheses of Lemma~\ref{lem:combine} are then satisfied with the parameters 
  $$\eps=c_i, \qquad \rho_1=\beta^{-4}\left(\frac{c_i}{c_1}\right)^2, 
               \quad\text{ and }\quad \rho_2=\beta^{-2}\left(\frac{c_i}{c_1}\right).$$
Consider the two terms under the radical in the expression for $\rho'_2$ in (\ref{def:K'}).  
Note that the second being dominated by the first is equivalent to 
  $L(K_1)^2>\displaystyle \frac{3\rho_1}{2\rho_2}c_i^2$.  
we have  $$|I_j| \le |I_{j_0}| < \beta^{2N_M} = \left(\frac{c_M}{c_{M+1}}\right)^2 
                                             < \left(\frac{c_1}{c_i}\right)^4 < \frac{2\rho_2}{3\rho_1},$$ 
  the first inequality a conseqeunce of (\ref{def:c_i}).  Since  $L(K_1)^2\ge L(K_1)L(\partial K_1)\ge c_i^2/|I_j|$, it now follows that 
  $$\rho'_2 < 3 \rho_2.$$  
Also, since $\rho_1=\rho_2^2$ and $\beta^2 c_{i+1}=\rho_2^2c_i$, we have 
\begin{equation}\label{ieq:eps'}
  \eps' = (8\rho_1\rho'_2)^{1/2}c_i < 2\sqrt6\rho_2^{3/2}c_i 
                                      < \frac{\beta c_{i+1}}{\sqrt{6\rho_2}} < \beta c_{i+1}
\end{equation}
  where we used $\beta<1/12$ in the middle inequality.  

By Lemma~\ref{lem:combine}, there exists a $\beta c_{i+1}$-shrinkable complex $K'$ 
  of level $i+1$ satisfying $$L(K')<3\rho_2L(K_1).$$  
Since there are no $c_{M+1}$-shrinkable complexes of level $M+1$, we have our 
  desired contradiction when $i=M$.  For $i<M$, we have 
\begin{align*}
  L(K')L(\partial K')|I_{j-1}| < \frac{9\rho_2^2L(K_1)^2|I_j|}{\beta}
                                          < \frac{9\rho_2^2c_i^2L(K_1)}{\beta^2L(\partial K_1)}
                                          < 9\beta^{-8}c_i^2\left(\frac{c_i}{c_1}\right)^4 < c_{i+1}^2,
\end{align*}
 where in the last inequality we have used (\ref{def:c_i}) and (\ref{eq:boundary}).  The induction hypothesis ($P_{j-1}$) implies $K'\in\cA_{i+1}(j-1)$, meaning 
  $$d(\theta(K'),I_{j-1}) > \frac{\beta c_{i+1}^2}{4L(K')L(\partial K')} 
                                     \ge \frac{\beta c_{i+1}^2}{4L(K')^2}.$$  
On the other hand by  (\ref{ieq:eps'}), $K'$ is in fact $\displaystyle \frac{\beta c_{i+1}}{\sqrt{6\rho_2}}$-shrinkable, 
  and since $c_{i+1}>\sqrt{6\rho_2}c_i$, we have 
\begin{align*}
  d(\theta(K',I_{j-1}) &\le \left| \theta(K') - \theta(K_1) \right| + d(\theta(K_1),I_j) \\
      &< \frac{\beta^2c_{i+1}^2}{6\rho_2L(K')L(K_1)} + \frac{\beta c_i^2}{L(K_1)L(\partial K_1)}
       < \frac{\beta c_{i+1}^2}{12\rho_2L(K')L(K_1)} < \frac{\beta c_{i+1}^2}{4L(K')^2}
\end{align*}
which contradicts the previously displayed inequality.  This finishes the proof of Step 2.  

\textit{Step 3.} For each $i=1,\dots,M$, we have $\diam \Theta_i(j) < \frac\beta2|I_j|$. \\ 
Assume $\Omega_i(j)\neq\emptyset$ and fix a complex $K_0$ in it.  
The previous step implies for any $K\in\Omega_i(j)$, we have $\partial K=\partial K_0$.  
Moreover, the longest saddle connection on $\partial K_0$ belongs to $K$ so that 
  since $K$ is $\beta c_i$-shrinkable, we have (using $\beta<1/5$)
  $$\left| \theta(K) - \theta(\partial K_0) \right| < \frac{\beta^2c_i^2}{L(K)L(\partial K_0)} 
                                                 = \frac{\beta^2 c_i^2}{L(K)L(\partial K)} < \frac\beta5 |I_j|.$$
Thus, $\Theta_i(j)$ is inside a ball of radius $\frac{\beta}{5}|I_j|$, so that 
  its diameter is $\le\frac{2\beta}{5}|I_j|<\frac\beta2|I_j|$.  

\textit{Step 4.}  We now show that ($P_j$) holds. \\
Suppose $K\in\cE_i$ is such that $L(K)L(\partial K)|I_j| < c_i^2$.  
Since $|I_j| \ge \beta|I_{j-1}|$, we have  $L(K)L(\partial K)|I_{j-1}| < \beta^{-1}c_i^2$.  
There are two cases.  If $L(K)L(\partial K)|I_{j-1}| < c_i^2$, then ($P_{j-1}$) implies 
  $K\in\cA_i(j-1)\subset\cA_i(j)$ and we are done.  
Otherwise, we conclude that $K\in\Omega_i(j-1)$ so that, by the previous step, 
  $\theta(K)$ lies in an interval of length $<\frac\beta2|I_{j-1}|$ centered about $z_i(j-1)$.  
Since Bob's interval $I_j$ must be disjoint from the interval of length $\beta|I_{j-1}|$ 
  centered at $z_i(j-1)$ chosen by Alice, we have 
  $$ d(\theta(K),I_j) > \frac\beta4|I_{j-1}| > \frac{\beta c_i^2}{4L(K)L(\partial K)}.$$  
In any case, we have $K\in\cA_i(j)$.  
\end{proof}

\begin{proof}[Proof of Theorem \ref{main}]
By Theorem \ref{blocking strategy} we are able to ensure that  for any level $i$ complex $K_i$, we have $$ \max\{L(\partial K) \cdot L(K) \cdot |I_{j}|, L(\partial K) \cdot L(K) \cdot d(\theta(K),I_{j})\}> \frac{\beta c_i^2}{4}.$$  In particular this holds when $i=1$.  Since there is only one saddle connection in a 1-complex,  and since  for any fixed  saddle connection $\gamma$, $|\gamma|^2|I_{j}|\to 0$ as $j\to\infty$,  we conclude that  for all but finitely many intervals $I_{j}$  we have
$$|\gamma|^2d(\theta_{\gamma},I_{j})=\max\{|\gamma|^2|I_{j}|, |\gamma|^2 d(\theta_{\gamma}, I_{j})\}>\frac{\beta c_1^2}{4}.$$ 
Thus if $\phi=\cap_{l=-1}^\infty I_{l}$ is the point we are left with at the end of the game, and $\gamma$ is a saddle connection, then $|\gamma|^2|\theta_{\gamma}-\phi|>\frac{\beta c_1^2}{4}$, which by Proposition \ref{equiv cond} establishes Theorem \ref{main}.
\end{proof}

\section{Playing the Game in the Stratum}

In this section we prove a theorem that as a corollary will imply  Theorem~\ref{thm:fullspace}, Theorem~\ref{thm:riemann},  Theorem~\ref{thm:iet} and Theorem~\ref{thm:pmf}.
In the general situation we will be  playing the game in a 
subset of a stratum $Q^1(k_1,\ldots, k_n,\pm)$. 
In the case of  Theorem~\ref{thm:fullspace} it will be the entire stratum.  
In the case of Theorem~\ref{thm:riemann} and Theorem~\ref{thm:pmf} it is the entire space of quadratic differentials on a fixed Riemann surface, and in the case of Theorem~\ref{thm:iet} a subset  of  the space of Abelian differentials on a compact Riemann surface.  What these examples have in common is that there is a $S^1$ action on the space given by $q\to e^{i\theta}q$. This will allow us to use the ideas of the previous section.

\subsection{Product structure and metric}
Given a quadratic differential $q_0$ that belongs to a stratum $Q^1(k_1,\ldots, k_n,\pm)$  and a triangulation $\Gamma=\{e_i\}_{i=1}^{6g-6+3n}$ of it, we have a chart 
  $\vhi:U\to\C^{6g-6+3n}$ on a neighborhood $U$ of $q_0$ in the stratum where 
  the triangulation remains defined, i.e. none of the triangles are degenerate.  

For sufficiently small $U$, 
using holonomy coordinates, we obtain an embedding $$\vhi_\Gamma:U\to\C^{6g-6+3n}$$ whose 
  image is a convex subset of a linear subspace.  
Equip $U$ with the metric induced by the norm $\|\bz\|_\Gamma=\max(|z_1|,\dots,|z_{6g-6+3n}|)$. 
Note that the notation $\|q_1-q_2\|_\Gamma$ for the distance between $q_1$ and $q_2$ in these holonomy coordinates should not be confused with the possibility that $q_1,q_2$ are quadratic differentials on the same Riemann surface in which case $q_1-q_2$ will  refer to vector space subtraction and $\|q_1-q_2\|$ the area.

We note that $U$ and the induced metric depend only on the $6g-6+3n$ homotopy classes relative to the zeroes 
  of the saddle connections in the triangulation.  However a change in homotopy classes will induce a bi-Lipschitz map of metrics and since winning is invariant under bi-Lipschitz maps, we are free to choose any triangulation.

Note that multiplication by $e^{i\theta}$ defines an $S^1$-action that is equivariant 
  with respect to $(U,\vhi)$.  
Let $\pi_a:\vhi(U)\to S^1$ be the map that gives the argument of $e_1$.  
Let $Z=\pi_a^{-1}(\theta_0)$ where $\theta_0=\pi_a(q_0)$ and 
  let $\pi_Z:\vhi(U)\to Z$ be the map that sends $q$ to the unique point of $Z$ 
  that is contained in the $S^1$-orbit of $q$.  
Then $$ \vhi(U) \simeq Z\times S^1 $$ with projections given by $\pi_Z$ and $\pi_a$.  

The metric on $Z$ is the ambient  metric: 
   $$d_Z(q_1,q_2) = \|q_1-q_2\|_\Gamma \quad\text{ for }\quad q_1,q_2\in Z$$ 
The metric on $U$ is given by 
  $$d_U(q_1,q_2) = \max( d_Z(\pi_Z(q_1),\pi_Z(q_2)), d_a(\pi_a(q_1),\pi_a(q_2)) )$$ 
  where $d_a(\cdot,\cdot)$ is the distance on $S^1$ measuring difference in angles.  
This metric has the property that a ball  in the metric $d_U$ is a ball in each factor. 
\begin{definition}
By an $\eps$-\emph{perturbation} of $q$ we mean any flat surface in $U$ whose 
  distance from $q$ is at most $\eps$.  
\end{definition}

We now show that the holonomy of any  {\em any} saddle connection of $q$ is not changed much by an $\eps$-perturbation.
Recall the constant $\lambda_2$, given in Lemma~\ref{lem:comparable} that depends 
  on the  and the choice of triangulation. 
  
\begin{lemma}\label{lem:angle}
Let $q'$ be an $\eps$-perturbation of $q$ and suppose that the homotopy class 
  specified by a saddle connection $\gamma$ in $q$ is represented on $q'$ by a union 
  of saddle connections $\cup_{1}^k\gamma'_i$.  
Then the total holonomy vector  $hol(\cup \gamma'_i)$ makes an angle at most $2\lambda_2\eps$ 
  with the direction of $\gamma$ and and its length differs from that of $\gamma$ 
  by a factor between $1\pm \lambda_2\eps$.  
Also, the direction of the individual $\gamma'_i$ also lie within $2\lambda_2\epsilon$ of $\gamma$. 
\end{lemma}
\begin{proof}
Represent $\gamma$ as a path in the triangulation $\Gamma$ on $q$.  
After perturbation, the total holonomy vectors   $hol(\gamma),hol(\gamma')$ satisfy  $$hol(\gamma')-hol(\gamma)\leq  
  |\gamma|_\Gamma\eps\le \lambda_2|\gamma|\eps,$$ by Lemma~\ref{lem:comparable}.  
Hence the difference in angle is at most
$$\arcsin\left(\frac{\lambda_2\eps|\gamma|}{|\gamma|}\right)\leq 2\lambda_2\eps,$$ proving 
  the first statement.  

For the individual $\gamma'_i$, fix a linear parametrization $q_t, 0\le t\le 1$, 
  so that $q_0=q$ and $q_1=q'$.  Then there are times $0=t_0<t_1<\dots<t_n=1$ 
  and saddle connections $\gamma_j$ on $q_{t_j}$ (that are parallel to other saddle connections on $q_{t_j}$) such that $\gamma_0=\gamma$, 
  $\gamma_n=\gamma'_i$, and, by the first part of the lemma, 
  the angle between $\gamma_j$ and $\gamma_{j+1}$ is at most $$2\lambda_2\eps(t_{j+1}-t_j).$$  
The triangle inequality now implies the angle between the holonomies of $\gamma$ 
  and $\gamma'_i$ is at most $2\lambda_2\eps$.  
\end{proof}

\subsection{Moving complexes}

In the proof of the theorems we will need to move triangulations from one quadratic differential to another in order to play the games.  In such a move, vertices of the triangulation may hit other edges  forcing degenerations.  The following theorem is the mechanism for keeping track of complexes as they move. 
We first note that about each point in the stratum there is a neighborhood where the homotopy class of a saddle connection can be consistently defined.

\begin{thm}
\label{thm:move}
Suppose $q_t; 0\leq t\leq 1$ is a smooth  path of quadratic differentials in a given stratum.  
Suppose $K$ is a complex on $q_0$ (with triangulation $\Gamma$). 
Then there is a complex  $K'$  on $q_1$ with triangulation, denoted $\Gamma'$ and a piecewise linear map $F:K\to K'$ such that 
\begin{enumerate}
\item the homotopy class of every saddle connection of $K$ is mapped by $F$ to a union of saddle connections  on $K'$.
  These saddle connections have the same homotopy class.
\item the closed subset $K'$ depends only on $K$ and the path of quadratic differentials; in particular it does not depend on the choice of triangulation of $\Gamma$ of $K$. 
\end{enumerate}
\end{thm}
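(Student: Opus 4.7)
The plan is to construct $K'$ and $F$ by a homotopy-tracking argument along the path $q_t$, and then show the resulting closed set depends only on $K$ by relating any two triangulations via elementary moves.

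First I would reduce to the generic case. By a small perturbation of the path that fixes the endpoints $q_0,q_1$ (which does not affect the homotopy classes of saddle connections and hence does not affect the claim), we can arrange that along the path zeroes meet edges of the currently valid triangulation transversally and one at a time, at isolated degeneration times $0<t_1<\dots<t_n<1$. On each open interval $(t_{i-1},t_i)$ the triangulation in force at time $t_{i-1}^+$ persists: every edge is a single saddle connection of $q_t$ varying smoothly in holonomy, and every $2$-simplex is an embedded affine triangle on $q_t$. At a degeneration time $t_i$, a zero $z$ of $q_{t_i}$ lands on the interior of some edge $e$; I replace $e$ by the two shorter saddle connections $e_1\cup e_2$ meeting at $z$, and subdivide the (at most two) triangles of the current triangulation incident to $e$ by adding the saddle connection(s) joining $z$ to the opposite vertex. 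Lemma~\ref{lem:angle} guarantees the relevant homotopy class is still represented by a union of saddle connections, so this subdivision is well defined.

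Iterating this procedure from $t=0$ to $t=1$ produces a triangulation $\Gamma'$ on $q_1$ whose edges are precisely the pieces into which the original edges of $\Gamma$ have broken, and whose $2$-simplices are the pieces into which the triangles of $\Gamma$ have broken. For each edge $e\in\Gamma$, the ordered concatenation of its pieces in $\Gamma'$ realizes the same rel-zeroes homotopy class as $e$, giving part (1). I define $F:K\to K'$ piecewise linearly: on each triangle $\Delta$ of $\Gamma$ I use the identification $\Delta\to(\text{corresponding region on }q_1)$ coming from the tracking, which is linear on each sub-triangle produced by the degenerations, and these piecewise linear maps agree on shared edges by construction.

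The main obstacle is part (2), independence of the choice of $\Gamma$. The plan here is to show that any two triangulations $\Gamma,\Gamma''$ of the same closed set $K$ are connected by a finite sequence of elementary moves of two types: (a) a \emph{flip}, exchanging the two diagonals of a quadrilateral whose four sides lie in the triangulation, and (b) insertion/deletion of a saddle connection joining a zero already on the $1$-skeleton to a zero on the opposite side of a triangle. A standard flip-connectedness argument for triangulations of a flat surface with marked points gives this. I then verify that each elementary move commutes with the tracking procedure: for move (a), the alternative diagonal is also a saddle connection on every intermediate $q_t$ except at finitely many moments (again perturbing the path generically), so both diagonals deform to saddle connections on $q_1$ bounding the same two triangles, producing the same underlying closed subset; for move (b), an added diagonal deforms to a union of saddle connections that lies inside the deformation of the containing triangle, again leaving the union of the $2$-simplices unchanged as a subset of $q_1$. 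Since every elementary move preserves the closed set $K'$, so does any sequence of them, and hence $K'$ depends only on $K$ and the path.
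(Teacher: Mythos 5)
Your construction has two genuine gaps. First, the reduction to a generic path is not legitimate as stated. Conclusion (2) of the theorem is about the given path (and in the applications the path is the specific linear segment joining $q$ to its $\eps$-perturbation), so you must either run the argument on that path or prove that your moved set is unchanged when you perturb the path — and the latter is itself a well-definedness statement of the same nature as the one you are trying to prove, which your outline never addresses. Moreover the degeneration that matters most sits at the fixed endpoint $t=1$ (that is exactly the situation in which saddle connections of $K$ break on $q_1$), and a perturbation fixing endpoints cannot make that degeneration transversal or ``one at a time.'' Second, your local model of a degeneration is too narrow: besides a zero landing in the interior of an edge with the two incident triangles subdividing, a triangle of the current triangulation can collapse entirely onto a union of parallel saddle connections — for instance when one of its own vertices (possibly a zero carrying a loop edge) lands on the opposite edge, or when several zeroes arrive simultaneously. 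In that case ``add the saddle connection joining $z$ to the opposite vertex'' is meaningless, and the combinatorics of the new complex is not obtained by subdivision. The paper's proof deals with this by tracking only until the first time $t_1$ at which some homotopy class is no longer a single saddle connection, defining $K_{t_1}$ as the limit closed set $\bar\Omega\setminus\Omega$ in the pulled-back tautological bundle, retriangulating the resulting (possibly collapsed or $n$-gon) pieces, and repeating finitely often by compactness; no genericity is ever invoked.

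The independence claim (2) is also not carried by your argument. Flip-connectedness is a statement about geodesic triangulations of a closed flat surface with a fixed vertex set; here $K$ is a subcomplex with boundary, possibly with external edges lying on no $2$-simplex, and different triangulations of the same closed set need not be related by the two moves you list — in any case this is asserted, not proven. Worse, your verification that each elementary move ``commutes with the tracking'' again appeals to a per-move generic perturbation of the path, which changes the very object whose well-definedness is at issue. The paper's route is different and softer: it uses the earlier observation that the interior of a complex is determined by $\partial K$ together with the internal/external subdivision and the transverse orientations of internal boundary saddles. Given two triangulations of $K$ that persist on $(0,t_1)$, the two deformation maps agree on $\partial K$, hence the deformed complexes have the same boundary, the same orientation data, therefore the same interior and the same closed support. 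Some argument of this kind (or a proven substitute) is needed for (2); your outline does not supply one.
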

\begin{proof}
Let $T$ be the set of $t\ge0$ such that the geodesic representative on $q_t$ of the homotopy class of each saddle connection in $\Gamma$ is realized by a single saddle connection in $q_t$.  Let $A(0)$ be the connected component of $T$ containing $0$.  
For each $t\in A(0)$, let $\Gamma_t$  be the collection of saddle collections in $q_t$ representing these homotopy classes. 
It is easy to see that $\Gamma_t$ is a pairwise disjoint collection and that 
  three saddle connections in $\Gamma$ bound a triangle if and only if the 
  corresponding saddle connections in $\Gamma_t$ bound a triangle.  
Let $K_t$ be the complex determined by $\Gamma_t$.  
The obvious piecewise linear map $f_t:K\to K_t$ is a homeomorphism onto its image.  

Let $t_1=\sup A(0)$.  We claim that the closed set $K_t$ is independent of the choice of triangulation $\Gamma$ for $0<t<t_1$.  Indeed, suppose $\tilde\Gamma$ is another triangulation of $K$ such that for $0<t<t_1$ the geodesic representative on $q_t$ of the homotopy class of each saddle connection in $\tilde\Gamma$ is realized by a single saddle connection on $q_t$.  Let $\tilde f_t:K\to\tilde K_t$ be the simplicial homeomorphism between $K$ and the complex $K_t$ determined by the corresponding collection $\tilde\Gamma_t$ of saddle connections on $q_t$.  
Then $\tilde f_t$ and $f_t$ agree on $\partial K$, so that 
  $\partial K_t = f_t(\partial K) = \tilde f_t(\partial K) = \partial\tilde K_t$. 
Note that $f_t$ and $\tilde f_t$ induce the same transverse orientation 
  on any saddle connection in $\partial K_t$.  
Since $\tilde f_t\circ f_t^{-1}$ restricts to the identity on $\partial K_t$, 
  it maps each connected component of the interior of $K_t$ to itself.  
Hence, the interiors of $K_t$ and $\tilde K_t$ coincide, and therefore 
  $K_t=\tilde K_t$, proving the claim.  

Let $(E,\pi:E\to[0,t_1])$ be the pull-back of the tautological bundle so that 
  each fiber $\pi^{-1}(t)$ is a copy of $q_t$ for each $t\in[0,t_1]$.  
Let $\Omega\subset E$ be the subset that intersects each fiber in $K_t$.  
Define $K_{t_1}=\Bar\Omega\setminus\Omega$ and note that it is a closed 
  set contained in the fiber over $t=t_1$.  
Let $f_{t_1}:K\to K_{t_1}$ be the pointwise limit of the maps $f_t$ as $t\to t_1^-$.  
Each saddle connection $\gamma$ of $\Gamma$ is mapped by $f_{t_1}$ to 
  a union of parallel saddle connections $\gamma'_i,i=1,\dots,r=r(\gamma)$.  
A triangle $\Delta$ determined by $\Gamma$ may collapse under $f_{t_1}$ 
  to a union of parallel saddle connections; otherwise, $f_{t_1}(\Delta)$ has 
  $n=n(\Delta)$ saddle connections on its boundary, possibly with $n>3$.  
If $n>3$, then $n-3$ zeroes of $q_t$   hit the interior of an edge of $\Delta$ at $t=t_1$.
 In this case we triangulate $f_{t_1}(\Delta)$ by adding $n-3$ "extra" 
  saddle connections.  
Let $\Gamma_{t_1}$ be the collection of saddle connections $\gamma'_i$ 
  associated to $\gamma\in\Gamma$ together with the ``extra" saddle 
  connections needed to triangulate $f_{t_1}(\Delta)$ for $\Delta$ that do 
  not collapse and have $n(\Delta)>3$.  
Let $K'_{t_1}$ be the complex determined by $\Gamma_{t_1}$ and let $f'_{t_1}:K\to K_{t_1}'$ be the composition of $f_{t_1}$ with the inclusion of $K_{t_1}$  into $K'_{t_1}$.  
It is easy to see that $F_{t_1}:=f'_{t_1}$ maps saddle connections to unions of saddle connections and that $K'_{t_1}$ does not depend on the choice of $\Gamma$.

If $t_1=1$ we are done and we set $K'=K'_{t_1}$.
Thus assume  $t_1<1$. 
 We repeat the construction above starting with $K'_{t_1}$ and form the maximal set $A(t_1)$ of times $t_1\leq t<t_2$   such that the homotopy class of each saddle connection of $K'_{t_1}$ is realized by a single saddle connection on $q_t$.  We repeat the procedure, building a new complex $K'_{t_2}$ and finding a map $f'_2:K'_{t_1}\to K'_{t_2}$.  We then let $F_{t_2}=F_1\circ f'_{t_1}$.  The compactness of $[0,1]$ implies that this procedure only need be repeated a finite number of times $t_1<t_2<\dots<t_N=1$.  We inductively find $F_{t_N}$ and set $F=F_{t_N}$ and $K'=K'_{t_N}$. 
\end{proof}

\begin{definition}
Let $q'$ be an $\eps$-perturbation of $q$ and $K$ a complex in $q$.  
Let $K'$ be the complex obtained by applying Theorem~\ref{thm:move} using 
the linear path in the stratum joining $q$ and $q'$.  
We call $K'$ the {\em moved} complex.  
\end{definition}

\begin{cor} \label{cor:move:ang} Let $q'$ be an $\eps$-perturbation of $q$ and suppose that $K$ is a complex on $q$ that moves to $K'$ on
 $q'$ by $F$. Let $\gamma \in K$. Let $\cup_1^k \gamma_i'=F(\gamma)$. Then $|\theta_{\gamma}-\theta_{\gamma_i'}|<2\lambda_2\eps$.
\end{cor}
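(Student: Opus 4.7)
The plan is to reduce the corollary directly to Lemma~\ref{lem:angle} by unpacking the construction of $F$ in Theorem~\ref{thm:move}. The construction there was built so that part (1) of the conclusion holds: each saddle connection $\gamma\in K$ is sent by $F$ to a union of saddle connections $\cup_1^k\gamma'_i$ on $q'$ whose concatenation is homotopic rel the zeroes to $\gamma$. Thus the collection $\cup_1^k\gamma'_i$ is precisely the geodesic representative on $q'$ of the homotopy class specified by $\gamma$ on $q$.

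Since by hypothesis $q'$ lies at distance at most $\eps$ from $q$ in the local holonomy chart, it is an $\eps$-perturbation of $q$ in the sense of the earlier definition, and Lemma~\ref{lem:angle} applies verbatim to $\gamma$ and its representative $\cup_1^k\gamma'_i$ on $q'$. The final sentence of that lemma states exactly the assertion we need, namely that every individual component $\gamma'_i$ satisfies $|\theta_\gamma-\theta_{\gamma'_i}|<2\lambda_2\eps$.

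The only verification required is therefore that the construction in Theorem~\ref{thm:move}, when applied to the linear path $q_t=(1-t)q+tq'$, genuinely produces the moved complex $K'$ whose edges are exactly the geodesic representatives of the homotopy classes of the edges of $K$ (together with possibly some ``extra" edges subdividing collapsing or degenerating triangles). This is built into the proof of Theorem~\ref{thm:move}: at each time $t_j$ where a homotopy class ceases to be represented by a single saddle connection, one records the full union $\gamma'_i$ making up its geodesic representative, and the inductive composition of these piecewise linear maps defines $F$. Applying this remark to the relevant homotopy class yields the identification $F(\gamma)=\cup_1^k\gamma'_i$ with each $\gamma'_i$ a saddle connection on $q'$, after which the angle bound is immediate from Lemma~\ref{lem:angle}.

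Since the substance lies entirely in Lemma~\ref{lem:angle} and Theorem~\ref{thm:move}, there is no real obstacle here; the ``main point", such as it is, is simply matching the notation between those two results and noting that the ``extra" saddle connections introduced in Theorem~\ref{thm:move} to triangulate non-collapsing degenerating triangles do not appear among the $\gamma'_i$ produced from $\gamma$ itself, so that Lemma~\ref{lem:angle} applies without modification.
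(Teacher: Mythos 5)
Your overall direction (reduce the corollary to Lemma~\ref{lem:angle} via the construction of Theorem~\ref{thm:move}) is the right one, but the reduction as you state it has a genuine gap. The pivotal claim in your proposal is that $F(\gamma)=\cup_1^k\gamma_i'$ is ``precisely the geodesic representative on $q'$ of the homotopy class specified by $\gamma$.'' Theorem~\ref{thm:move} does not say this, and it is false in general: after the first degeneration time $t_1$, each piece of the collapsed $\gamma$ is tracked in its \emph{own} homotopy class, and the concatenation of these tracked pieces need not remain locally geodesic at the newly created vertex (once a zero has passed through the interior of $\gamma$, the angle condition at that zero can fail for later $t$, and the genuine geodesic representative of $\gamma$'s class on $q'$ may not contain the $\gamma_i'$ at all). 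So Lemma~\ref{lem:angle} cannot be applied ``verbatim'' in one step to the pair $(q,q')$: under your reading of its hypothesis the hypothesis is unverified (and generally fails), while under the loose reading (any union of saddle connections representing the class) the individual-direction conclusion of the lemma is not valid at that level of generality — a short component of an arbitrary representing union can point in a direction far from $\gamma$ while contributing negligibly to the total holonomy, which is all the one-step holonomy estimate controls.

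What the paper actually does, and what is missing from your proposal, is a chaining argument through the stages $t_1<\dots<t_N$ of the construction in Theorem~\ref{thm:move}. Over each stage the relevant object $\sigma_{k-1}$ is a \emph{single} saddle connection whose class degenerates at time $t_k$ into a union of mutually parallel saddle connections (the honest geodesic limit), so Lemma~\ref{lem:angle} applies legitimately to that stage, with perturbation size $\eps_k$; since the path is linear, $\sum_k\eps_k=\eps$, and the triangle inequality sums the stagewise bounds $2\lambda_2\eps_k$ to the stated $2\lambda_2\eps$. This subdivision, the choice of the intermediate chain $\sigma_0=\gamma$, $\sigma_k\subset f'_{t_k}(\sigma_{k-1})$, $\sigma_N=\gamma_i'$, and the subadditivity of the perturbation sizes are the real content of the corollary; without them your argument does not control the directions of the individual $\gamma_i'$.
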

\begin{proof}
Each $f'_{t_k}, i=1,\dots,N$ in the proof of Theorem~\ref{thm:move} is a piecewise 
linear map between complexes on $q_k$ that are $\eps_k$-perturbations of one another, where $\sum_{k=1}^N \eps_k=\eps$.  Let $\sigma_k,k=0,1,\dots,N$ be the saddle connections such that $\sigma_0=\gamma$, $f'_{t_k}(\sigma_{k-1})\supset\sigma_k$, 
and $\sigma_N=\gamma'_i$.  Lemma~\ref{lem:angle} implies $|\theta_{k-1}-\theta_k|<2\lambda_2\eps_k$ where $\theta_k$ denotes the direction of $\sigma_k$.  The conclusion of the corollary now follows from the triangle inequality.  
\end{proof}

\begin{prop}\label{prop:inclusion}
Suppose $K_1\subset K_2$ as closed subsets and each is a complexes on $q_0$. They are both moved to  $q_1$ to become complexes $K_1',K_2'$. Then again viewed as closed subsets, we have $K_1'\subset K_2'$. 
\end{prop}

\begin{proof}
We can extend the triangulation of $K_1$ to a triangulation $\hat K_1$ of the same closed subset as is defined by $K_2$ and such that $\hat K_1$ and $K_2$ coincide on the boundary.  
We move both $K_2$ and $\hat K_1$ to $q_1$ obtaining triangulations $K_2'$ and $\hat K_1'$.  Theorem~\ref{thm:move} says that $K_2'$ and $\hat K_1'$ are triangulations of the same closed set. 
Clearly $K_1'\subset \hat K_1'$ and we are done. 
\end{proof}

We adopt the notation $(K,q)$ to refer to a complex on the flat surface defined by $q$.

\begin{definition}
Suppose $(K_1,q_1)$ and $(K_2,q_2)$ are complexes of distinct flat surfaces.  We say  $(K_1,q_1)$  and  $(K_2,q_2)$ are \emph{not combinable} if  $K_1$ moved to $q_2$ satisfies   $K_1'\subseteq K_2$ and $K_2$ moved to $q_1$ satisfies $K_2'\subseteq K_1$. Otherwise they are said to be \emph{combinable}. 
\end{definition}

\subsection{Proof of Theorem \ref{thm:fullspace}}
We are given the compact set $\bar U$ in the stratum. For any $\alpha,\beta$ we can play the game a finite number of steps so that we are allowed to assume  that $U$ is a ball with center $q_0$ which has a  triangulation $\Gamma$ which remains defined   for all $q\in U$.  We are  therefore able to talk about $\epsilon$ perturbations in $U$.  Furthermore since $\bar U$ is compact, the constant  $\lambda_2$
given by Lemma~\ref{lem:comparable} which depends only on the  and the triangulation can be taken to be uniform in $U$. 
Furthermore because our choice of metric is the sup metric, each ball $B_j$ will be of the form $$B_j=Z_j\times I_j,$$
where $Z_j$ is a ball in the space $Z$ and $I_j\subset S^1$.

Now choose 
\begin{equation}
\label{eq:alpha}
\alpha<\min\{\frac 1 8,\frac{1}{720(6g-6+n)^2\lambda_2}\},
\end{equation}
where $n$ is the number of zeroes.  (Again the significance of the choice of constants $8$, $720$ is only to make certain inequalities hold.) 
Let $L_0$ denote the length of the shortest saddle connection on $q_0$.  
Let $0<c_1<\dots<c_{M+1}$ be given by 
  $c_i=(\alpha\beta)^{N_i}c_{i+1}$ and 
  $$c_{M+1} = \min( L_0\beta^{N_M},L_0|I_1|^{1/2}\eps_0 )$$ 
  where $N_i$ are defined by $N_1=4M+1$ and 
  $N_{i+1} = 4M + 4(N_1+\dots+N_i)$ so that 
\begin{equation}\label{def:c_i:2}
  \frac{c_{i+1}}{c_i} = (\alpha\beta)^{-4M-1}\left(\frac{c_i}{c_1}\right)^4 
          \ge 100(\alpha\beta)^{-4M}\left(\frac{c_i}{c_1}\right)^4.  
\end{equation}

Now inductively, given a ball $B_j=Z_j\times I_j$, where $Z_j$ is centered at  $q_j$, 
let $\cE_i(B_j):=\cE_i$ be the set of all marked $(\alpha\beta)^{3+M} c_i$-shrinkable complexes $(K,q)$ of level $i$ where  $q\in B_j$.  
Given $I_j$ with $j \equiv M-i+1$ mod $M$, we let 
\begin{equation*}
  \cA_i(j) := \left\{(K,q)\in\cE_i : d(\theta(K),I_j) > \frac{(\alpha\beta)^M c_i^2}{24L(K)L(\partial K)} \right\}.
\end{equation*}

We shall prove the following statement for every $j\ge1$ and every  $i\in\{1,\dots,M\}$, where $j \equiv M-i+1$ mod $M$. 
\begin{align*}
\tag{$P_j$} \; \forall K\in\cE_i \qquad
                                 L(K)L(\partial K)|I_j| < c_i^2 \;\;\Longrightarrow\;\; K\in \cA_i(j)
\end{align*}

Note that ($P_j$)  holds automatically for all $j<j_0:=\min\{k: |I_k|<\beta^{2N_M}\}$ because 
  $$L(K)L(\partial K)|I_j| \ge L_0^2\beta^{2N_M} \ge c_{M+1}^2 > c_i^2$$ 
  while if $j_0=1$, we note that $L(K)L(\partial K)|I_1|\ge L_0^2|I_1| \ge c_{M+1}^2>c_i^2.$  
  
We proceed by induction and suppose that Alice is given a ball $B_j=Z_j\times I_j$, $j\ge j_0$, where $Z_j\subset Z$ is a ball  and $I_j\subset I$ is an interval. Suppose inductively  that ($P_{k}$) holds for $k\leq j$. We will show that Alice has a choice of a ball $A_j\subset B_j$ to ensure $(P_{j+M})$ will  hold.   

Define
 \begin{equation*}
  \Omega_i(j) := \left\{ (K,q)\in\cE_i\setminus\cA_i(j) : 
     \frac{c_i^2}{L(K)L(\partial K)} \le |I_j|< \frac{(\alpha\beta)^{-M}c_i^2}{L(K)L(\partial K)} \right\}. 
\end{equation*}

We summarize our strategy.   We will show that for any $q_1,q_2 \in B_j$  such that $d_U(q_1,q_2)\leq \alpha |B_j|$, no two 
  complexes $(K_1,q), (K_2,q)$  are combinable (step 2). Then we will show that if $d_U(q',q)\leq \alpha|Z_j|$, and $(K,q),(K',q')\in\Omega_i(j)$ are complexes
  which are not pairwise combinable, then $|\theta(K)-\theta(K')|$ is small (step 3). Then choosing some $(K,q)$,  Alice can choose an angle $\phi$ where  $d(\phi,\theta(K))>\frac 1 3 |I_j|$,  and an  interval $I_j'$ centered at $\phi$  of radius  $\alpha|I_j|$. 
 There will be no $(K',q')$ with  $\theta(K')\in I_j'$ and where $K'$ is combinable with $K$ and  $d_U(q,q')\leq  \alpha|Z_j|$ (step 4). As in the proof of Theorem \ref{blocking strategy}, step 1 is a technical result controlling the ratio of $L(K)$ and $L(\partial K)$, which is necessary for step 2.

\textit{Step 1.}  
We show that for any $(K,q)\in\Omega_i(j)$, 
\begin{equation}\label{ieq:boundary}
  \frac{L(K)}{L(\partial K)} < (\alpha\beta)^{-2M}\left(\frac{c_i}{c_1}\right)^2.  
\end{equation}

This is essentially the same as the proof of Step 1 in the proof of Theorem~\ref{blocking strategy}. We provide the details.
Let $k$ be the previous stage for dealing with 1-complexes. Consider first the case that \begin{equation}
\label{eq:caseA}
L(\partial K)^2|I_j| <(\alpha \beta)^M c_1^2. 
\end{equation}
Then $L(\partial K)^2|I_{k}| < c_1^2$ so that ($P_{k}$) implies the longest saddle 
connection on $\partial K$ belongs to $A_1(k)$, meaning 
\begin{equation*}
  d(\theta(\partial K),I_{k}) > \frac{(\alpha\beta)^M c_1^2}{24L(\partial K)^2}>c_1^2|I_k|.  
\end{equation*}
But since $K$ is $(\alpha\beta)^{3+M} c_i$-shrinkable and not in $\cA_i(j)$, the triangle inequality implies 
\begin{align*}
  d(\theta(\partial K),I_{j}) &\le \left| \theta(\partial K) - \theta(K) \right| + d(\theta(K),I_j) \\
             &\le \frac{(\alpha\beta)^{6+2M}c_i^2}{L(K)L(\partial K)} + \frac{(\alpha\beta)^M c_i^2}{24L(K)L(\partial K)} 
             \le \frac{(\alpha\beta)^M c_i^2}{L(K)L(\partial K)}
\end{align*}
which implies (\ref{ieq:boundary}) using the fact that $\alpha\beta<1$.  Suppose now that (\ref{eq:caseA}) does not hold. Since
$(K,q) \in \Omega_i(j)$, we have 
\begin{equation*}
  \frac{L(K)}{L(\partial K)} = \frac{L(K)L(\partial K)|I_j|}{L(\partial K)^2|I_j|} 
                                             < \frac{(\alpha\beta)^{-M}c_i^2}{(\alpha\beta)^M c_1^2}
\end{equation*}
so that (\ref{ieq:boundary}) holds in this   case as well.

\textit{Step 2}. Now we show that if $(K_1,q_1)$ and $(K_2,q_2)$ are in $\Omega_i(j)$ and $$d_U(q_1,q_2)\leq \alpha |B_j|,$$ then $ K_1$ and   $K_2$ are not combinable. 
Assume on the contrary that they are combinable.  So without loss of generality assume there exists $\gamma \in K_2$  so that the moved  $\gamma' \not \subset K_1$.

Choose the following constants. 

Let $\eps=c_i(\alpha\beta)^{3+M}, \qquad \rho_1=2(\alpha\beta)^{-4M}\left(\frac{c_i}{c_1}\right)^2, 
               \quad \rho_2=(\alpha\beta)^{-2M}\left(\frac{c_i}{c_1}\right)$ and  \quad $\rho_3=12\rho_2(\rho_1+\rho_2)$.
               
We now show that we can combine $\gamma'$ to $K_1$ to make a $\sqrt{\rho_3}\epsilon$-shrinkable complex.
Observe that similarly to the proof of Step 2 in  Theorem~\ref{blocking strategy}  we have 
$$|\theta(K_1)-\theta(K_2)|\leq (1+\frac 1 {12})|B_j|\leq \frac{\rho_1\epsilon^2}{L(K_1)L(K_2)} \text{ and } \frac{L(K_1)}{L(K_2)}\leq \rho_2.$$

Therefore
\begin{multline}
|\theta_{\gamma'}-\theta(K_1)|
\leq |\theta_{\gamma}-\theta_{\gamma'}|+
|\theta_{\gamma}-\theta(K_2)|+|\theta(K_1)-\theta(K_2)|
\leq \\ \lambda_2 \|q_1-q_2\|+
 \frac{3\epsilon^2}{2|\gamma'|L(K_2)}+\frac{\rho_1\epsilon^2}{L(K_1)L(K_2)}
\leq  \frac{2\epsilon^2}{|\gamma'|L(K_2)}+\frac{2\rho_1\epsilon^2}{L(K_1)L(K_2)}\leq \frac{(2\rho_1+2\rho_2)\epsilon^2}{|\gamma'|L(K_1)}
\end{multline} 

Let $\theta=\theta(K_1)$ and $t=\log\frac{(L(K_1)}{\epsilon}$.
There is a saddle connection $\sigma$ disjoint from  $K_1$ such that on  $g_tr_\theta X$ the saddle connection $\sigma_{\theta,t}=g_tr_\theta\sigma$ satisfies

$$h_{\theta}(\sigma_{\theta,t})\leq (2\rho_2+2\rho_1+3)\epsilon \text{ and } v_{\theta}(\sigma_{\theta,t})\leq  \frac{3}{2}\rho_2\epsilon .$$
It follows that 
$$|\sigma |\leq \sqrt{\left(\frac{(2\rho_2+2\rho_1+3)\epsilon}{L(K_1)}\right)^2+\left(\frac{3\rho_2L(K_1)}{2}\right)^2}\leq 2\rho_2 L(K_1).$$

Now we show $\hat{K}=K_1\cup \sigma$ is $\epsilon\sqrt{\rho_3}$ shrinkable.
We have $L(\hat{K})=\max\{L(K_1),|\sigma|\}$.
If $L(\hat{K})=L(K_1)$ then $\hat{K}$ is $\epsilon\sqrt{\rho_3}$ shrinkable because

$$|\theta_\sigma-\theta(K_1)|\leq 2\frac{h_{\theta}(\sigma)}{|\sigma|} \leq  \frac{(5\rho_1+5\rho_2)\epsilon^2}{|\sigma|L(K_1)}.$$
If $L(\hat{K})=|\sigma|$ 
 for every $\xi\in K_1$ we have
\begin{align*}
  \left| \theta(\hat{K})-\theta_\xi \right| 
      &\le \left| \theta_\sigma - \theta(K_1) \right| + \left| \theta(K_1) - \theta_\xi \right| \\
      &< \frac{(5\rho_1+5\rho_2)\eps^2}{|\sigma| L(K_1)} + \frac{\eps^2}{|\xi| L(K_1)} 
      < \frac{(6\rho_1+6\rho_2)\eps^2}{|\xi| L(K_1)} 
      < \frac{(12\rho_1+12\rho_2)\rho_2\eps^2}{|\xi| L(\hat{K})}
\end{align*}
  where $2\rho_2L(K_1)\ge|\sigma|\ge L(K_1)\ge|\xi|$  was used in last two inequalities. 
   This shows that $\hat{K}$ is $\sqrt{\rho_3}\epsilon$-shrinkable.

   Now we derive a contradiction. Notice that $c_{i+1}(\alpha\beta)^{3+M}>\sqrt{\rho_3}\epsilon$ and so $\hat{K}$ is $c_{i+1}(\alpha\beta)^{3+M}$ shrinkable.
   
   Since there are no $c_{M+1}$-shrinkable complexes of level $M+1$, we have our 
  desired contradiction when $i=M$.  For $i<M$, we have 

\begin{align*}
  L(\hat{K})L(\partial \hat{K})|B_{j-1}| < \frac{4\rho_2^2L(K_1)^2|B_j|}{\alpha\beta}
                                          < \frac{4\rho_2^2c_i^2L(K_1)}{(\alpha\beta)^{1+M}L(\partial K_1)}
                                          < 4(\alpha\beta)^{-7M-1}c_i^2\left(\frac{c_i}{c_1}\right)^4 < c_{i+1}^2.
\end{align*}
The first inequality uses the bound on $L(\hat K)$ in terms of $L(K_1)$ and the definition of the game. The third inequality uses Step 1.  We conclude that   the induction hypothesis ($P_{j-1}$) implies $\hat{K}\in\cA_{i+1}(j-1)$, meaning 
  \begin{equation}
\label{eq:contradiction}
d_a(\theta(\hat{K}),I_{j-1}) > \frac{(\alpha\beta)^M c_{i+1}^2}{24L(\hat{K})L(\partial \hat{K})} 
                                     \ge \frac{(\alpha\beta)^M c_{i+1}^2}{24L(\hat{K})^2}.
\end{equation}  Since $\hat K$ is $\sqrt{\rho_3}\epsilon$ shrinkable by the  choice of $\epsilon$, it is  in fact ${c_i \sqrt{\rho_3}(\alpha\beta)^M}$-shrinkable, 
  and since ${c_{i+1}>c_i\sqrt{\rho_3}\sqrt{4(\alpha \beta)^{-M} \rho_2}}$, we have 
\begin{multline*}
  d_a(\theta(\hat K),I_{j-1}) \le \left| \theta(\hat K) - \theta(K_1) \right| + d_a(\theta(K_1),I_j) 
  \leq \frac{\rho_3 (\alpha\beta)^Mc_i^2}{L(K_1)L(\hat{K})}+\frac{(\alpha\beta)^M}{24L(K_1)L(\partial K_1)} \\
  <\frac{2\rho_3 \rho_2(\alpha\beta)^Mc_i^2}{L(\hat{K})^2}+\frac{(\alpha\beta)^M2\rho_2\rho_2\rho_1c_i^2}{L(\hat{K})^2}
       <  \frac{(\alpha\beta)^M c_{i+1}^2}{24L(\hat{K})^2}
\end{multline*}
  giving us the desired contradiction to (\ref{eq:contradiction}).

\textit{Step 3}. 
We show that if $(K_1,q_1)$ is not combinable with $(K_2,q_2)$, each belongs to $\Omega_i(j)$,  and $d_U(q_1,q_2)\leq 2 \alpha |B_j|$, then $|\theta(K_1)-\theta(K_2)|\leq \frac 1 3 |B_j|$.
\\

Since $(K_1,q_1)$ and $(K_2,q_2)$ are not combinable, when we move  $K_1$  to $q_2$ which we denote by $K_1'$, we have $K_1'\subseteq K_2$.  Similarly $K_2'\subset K_1$. 
By Proposition~\ref{prop:inclusion}
when we move  $K_1'$ back to $q_1$, denoted by $K_1''$, we have $$K_1''\subset K_2'\subset K_1.$$ But since each saddle connection on $\partial K_1$ is homotopic to a  union of saddle connections of $K_1''$ with common endpoints, 
$K_1$ and $K_1''$ bound a union of   (possibly degenerate)  simply connected domains each of which has a segment of $\partial K_1$ as a side.   Let  $\gamma$ be the longest saddle connection on $\partial K_1$ and let $\Delta$ be the corresponding simply connected domain.  Since $K_1''\subset K_2'\subset K_1$,  $\Delta$ contains a union of saddle connections $\hat\kappa'\subset \partial K_2'$
that join the endpoints of $\gamma$.  Let $p\leq 6g-6+n$ denote the cardinality of $\hat \kappa'$.

Since $d_U(q_1,q_2)\leq 2\alpha |B_j|$, each is a $2\alpha |B_j|$ perturbation of the other. 
Since the angle the saddle connections of $\partial K_2$ make with each other goes to $\pi$ as the length of the segments goes to $\infty$, and these angles change by a small factor, by Corollary~\ref{cor:move:ang} we have for some $\kappa'\in\hat\kappa'$,
\begin{equation}\label{compare2}L(\partial K_1)\leq 2p|\kappa'|.
\end{equation}

and for all  $\kappa'\in\hat\kappa'$,
\begin{equation}
 \label{compare1}  |\kappa'|\leq pL(\partial K_1).
\end{equation}

Since lengths change by a factor of at most $1+\lambda_1\leq \frac{3}{2}$ in moving, and since $\kappa'$ arises from a saddle connection $\kappa\subset \partial K_2$, we have that $$L(\partial K_1)\leq 3p|\kappa|\leq 3pL(\partial K_2),$$ and by symmetry $$L(\partial K_2)\leq 3rL(\partial K_1),$$ for some constant $r\leq 6g-6+n$,  so that 
\begin{equation}
\label{eq:kappa}|\kappa'|\geq \frac{L(\partial K_2)}{9rp}\geq \frac{L(\partial K_2)}{9(6g-6+3n)^2}.
\end{equation}

We also claim that 
 $$|\theta_{\kappa'}-\theta(K_1)|<\frac 1 {20} |B_j|.$$
 To see this, by Corollary~\ref{cor:move:ang} applied twice, first  to the moved $K_1'$ and then to $K_1''$, and by the choice of $\alpha$, in  (\ref{eq:alpha}),   we have that for all $\gamma'\in \partial \Delta$ 
 $$|\theta_{\gamma'}-\theta_\gamma|\leq \frac{1}{40}|B_j|,$$ 
which implies since $\kappa'$ is a  union of saddle connections in  $\Delta$ that $$|\theta_\gamma-\theta_{\kappa'}|\leq \frac{1}{40}|B_j|.$$ 
 The shrinkability of $K_1$ implies that 
$$|\theta(K_1)-\theta_\gamma|\leq  \frac{c_i^2(\alpha\beta)^{2M}}{L(K_1)|\gamma|}\leq \frac{1}{40}|B_j|,$$ so the claim follows from the last two inequalities.

By Corollary~\ref{cor:move:ang},  the triangle inequality, and  the above claim  we have
 $$ |\theta(K_1)-\theta(K_2)|\leq |\theta(K_1)-\theta_{\kappa'}|+|\theta_{\kappa'}-\theta_\kappa|+|\theta_\kappa-\theta(K_2)|\leq \frac{1}{20}|B_j|+\frac{1}{40}|B_j|+|\theta(K_2)-\theta_\kappa|.$$
  By our shrinkability assumption on $K_2$,  
$$|\theta_\kappa-\theta(K_2)|\leq \frac{c_i^2(\alpha\beta)^{2M}}{L(K_2)|\kappa|}\leq\frac{1}{20}|B_j|,$$
where the second  inequality follows from 
 (\ref{eq:kappa}), the definition of $\Omega_i(j)$,  and the choice of $\alpha$ given in (\ref{eq:alpha}).  Step 3 follows. 

\textit{Step 4}.

Bob presents Alice with a ball $B_j=Z_j\times I_j$ where $j \equiv M-i+1 $ mod $M$. If there is no $(K,q)$ in $\Omega_i(j)$, Alice makes an arbitrary move. 
Otherwise, pick a $(K,q)\in \Omega_i(j)$. Alice chooses a ball ${A_j=Z_j'\times I_j' }$ of diameter $\alpha|B_j|$, 
 whose center has first coordinate $q$ and second coordinate is as far from $\theta(K,q)$ as possible. Observe that $${d_a(\theta(K,q),I_j)'\geq( \frac 1 2 -2\alpha)|B_j|}.$$
 By Step 2 if $\hat{q}\in Z_j$ and $(\hat{K},\hat{q})\in \Omega_i(j)$ then $\hat{K}$
 and  $K$ are not combinable.   By Step 3
  $$d(\theta(\hat{K
},\hat q),I_j')\geq (\frac 1 2-2\alpha) |B_j|-\frac 1 3 |B_j|\geq (\frac 1 6 - 2\alpha)|B_j|.$$  Now because $(\hat{K}, \hat q) \in \Omega_i(j)$, using the left hand inequality in the definition and that $\alpha\beta<1$,  we conclude  that $$d_a(\theta(\hat{K}, \hat q),I_j')\geq (\frac 1 6-2\alpha)\frac{c_i^2(\alpha\beta)^M}{L(\hat{K})L(\partial \hat{K})}.$$ Because $d_a(\theta,I_j')\leq d_a(\theta,I_{j+M}')$ we know then that $(P_{j+M})$ holds.  This finishes the inductive proof of $(P_j)$.


We finish the proof of Theorem~ \ref{thm:fullspace}.  By $(P_j)$ we are able to ensure that  for any level $i$ complex $K$ on a surface $q$, we have 
$$ \max\{L(\partial K) \cdot L(K) \cdot |B_{j}|, L(\partial K) \cdot L(K) \cdot d_\theta((q,K),B_{j}\}> \frac{(\alpha\beta)^{M} c_i^2}{4}.$$
  In particular this holds when $i=1$.  Since there is only one saddle connection in a 1-complex,  and since  for any fixed  saddle connection $\gamma$ on a surface $q$, $|\gamma|^2|B_{j}|\to 0$ as $j\to\infty$,  we conclude that  for all but finitely many balls $B_{j}$  we have
$$|\gamma|^2 d_\theta((q,\gamma),B_{j})=\max\{|\gamma|^2|B_{j}|, |\gamma|^2 d_\theta((q,\gamma), B_{j}\}>\frac{(\alpha\beta)^{M} c_1^2}{4}.$$ 
Thus if $(q,\phi)=\cap_{l=-1}^\infty B_{l}$ is the point we are left with at the end of the game, and $\gamma$ is a saddle connection on $q$, then 
$|\gamma|^2|\theta_{\gamma}-\phi|>\frac{(\alpha\beta)^{M} c_1^2}{4}$, which by Proposition \ref{equiv cond} establishes the statement of strong winning in Theorem \ref{thm:fullspace}.  

The set cannot be absolute winning for the following reason. 
Bob begins by choosing a ball $I_1$ centered at some quadratic differential which has a vertical saddle connection $\gamma$. The set  $X_\gamma$ consisting of quadratic differentials with a vertical saddle connection $\gamma$ is a closed subset of codimension one and such quadratic differentials  are clearly not bounded.  Then  whatever Alice's move of a ball $J_1\subset I_1$,  Bob  can find a next ball $I_2\subset I_1\setminus J_1$ centered at some new point in  $X_\gamma$ which shows that bounded quadratic differentials  are not absolute winning.

An identical proof allows us the same theorem in the case of marked points. 

\begin{thm}\label{marked strat} Let $Q$ be a stratum of quadratic differential with $k$ marked points. Let $U \subset Q$ be an open set with compact closure in $Q$ where the metric given by local coordinates is well defined. The set  $E\subset \bar U$ consisting of those quadratic differentials $q$ such that the Teichm\"uller geodesic defined by $q$ stays in a compact set in the stratum  is $\alpha$  winning for Schmidt's game.   In fact it is $\alpha$-strong winning.
\end{thm}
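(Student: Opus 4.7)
The plan is to carry out the proof of Theorem~\ref{thm:fullspace} verbatim, reinterpreting a flat surface with $k$ marked points as a flat surface whose singular set is enlarged to include those marked points (treated as cone points of angle $2\pi$). Under this convention, a saddle connection is any geodesic segment joining two elements of the enlarged singular set with no such element in its interior, and a complex is a simplicial complex triangulated by such saddle connections exactly as in Section~2. None of the constructions of Section~2 (the complex $K$, its level, $\epsilon$-shrinkability, combinability, Lemmas~\ref{lem:sigma}, \ref{lem:combine}, \ref{lem:combine2} and Theorem~\ref{thm:move}) uses positivity of the cone-angle excess, so they all transfer. The maximum level $M$ of any $\epsilon_0$-complex is still finite, now bounded in terms of $6g-6+n+k$ via Lemma~\ref{lem:small}, because the proof only invokes the unit-area normalization to triangulate the surface by edges of bounded length.

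First I would fix a triangulation $\Gamma$ of $q_0\in \bar U$ whose vertices include all marked points, and obtain a convex holonomy chart $\vhi_\Gamma:U\to\C^{6g-6+3n+k}$ with the same product structure $\vhi(U)\simeq Z\times S^1$ used in Section~3, using that $q\mapsto e^{i\theta}q$ acts equivariantly and preserves marked points. The compactness of $\bar U$ produces a uniform systole bound $L_0$, hence a uniform constant $\lambda_2$ from Lemma~\ref{lem:comparable} and a uniform $M$. With these ingredients in hand, the entire inductive strategy of Theorem~\ref{thm:fullspace} can be reproduced: define the classes $\cE_i$, $\cA_i(j)$, $\Omega_i(j)$ with the same constants $c_i$ from (\ref{def:c_i:2}), choose $\alpha$ as in (\ref{eq:alpha}) with $n$ replaced by $n+k$, and let Alice play a ball $A_j=Z'_j\times I'_j$ of diameter $\alpha|B_j|$ whose second coordinate is as far as possible from $\theta(K,q)$ for some chosen $(K,q)\in\Omega_i(j)$, exactly as in Step~4 of that proof.

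The four inductive steps go through unchanged. Step~1 bounds $L(K)/L(\partial K)$ using $(P_{j-1})$ via the triangle inequality and the shrinkability of $K$. Step~2 rules out combinability of two $\Omega_i(j)$-complexes at surfaces within $\alpha|B_j|$ of one another by invoking Lemma~\ref{lem:sigma} and Lemma~\ref{lem:combine} to build a higher-level $c_{i+1}(\alpha\beta)^{3+M}$-shrinkable complex $\hat K$ whose blocking at stage $j-1$ yields a contradiction. Step~3 uses Proposition~\ref{thm:inclusion} and Corollary~\ref{cor:move:ang} to conclude that non-combinable $(K_1,q_1),(K_2,q_2)\in\Omega_i(j)$ must satisfy $|\theta(K_1)-\theta(K_2)|\le \tfrac13|B_j|$. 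Step~4 then concatenates Steps 2 and 3 to verify $(P_{j+M})$. The proof of the $1$-complex case and Proposition~\ref{equiv cond} imply that the point $(q,\phi)$ left at the end of the game is bounded, establishing $\alpha$-strong winning.

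The only real point requiring a sanity check is that Theorem~\ref{thm:move} applies when the singular set contains marked points. The issue is that along a linear path $q_t$ a marked point may coincide with a zero, or a zero/marked point may land in the interior of a saddle connection, forcing the degeneration process described in the proof of Theorem~\ref{thm:move}. Since that proof tracked homotopy classes rel.\ the full singular set, no modification is needed: the same ``extra" saddle connections are inserted, and the resulting moved complex $K'$ is well defined. The failure of absolute winning is produced by the original construction of Theorem~\ref{thm:fullspace}: Bob nests his balls along the codimension-one locus of surfaces carrying a fixed vertical saddle connection $\gamma$ (now possibly with an endpoint a marked point), each of which is unbounded, so Alice cannot force the intersection to lie in $E$.
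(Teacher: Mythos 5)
Your plan---enlarge the singular set by the marked points and rerun the proof of Theorem~\ref{thm:fullspace} verbatim---is the route the main text gestures at in one sentence, but it is not what the paper's detailed proof does, and the single sentence you devote to the transfer of Section~2 (``none of the constructions uses positivity of the cone-angle excess'') is exactly where the gap lies. A marked point has cone angle exactly $2\pi$, which is the borderline case for every Gauss--Bonnet argument in Section~2: a geodesic triangle cannot contain a zero in its interior, but it \emph{can} contain a marked point, and a geodesic representative of a homotopy class can pass straight through a marked point, splitting into parallel special/simple pieces. Concretely, a triangle with a marked point in its interior is a single closed complex admitting triangulations with $3$ edges (ignoring the marked point) or $6$ edges (coning from it), so the level of a complex is no longer well defined as stated, and the proof of Lemma~\ref{lem:distinct} (``any two triangulations of a complex contain the same number of edges''), the count in Lemma~\ref{lem:small}, and the surgeries in Lemma~\ref{lem:sigma}, Lemma~\ref{lem:combine} and Theorem~\ref{thm:move} all need restated hypotheses (e.g.\ every marked point lying in $K$ must be declared a vertex, and ``geodesic representative'' must everywhere be allowed to be a union of parallel saddle connections through marked points). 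These points are fixable, but they are precisely the marked-point phenomena, and your proposal does not address them; as written, the claim that Section~2 transfers untouched is false.

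For comparison, the paper's own proof (the appendix) is organized so that the complex machinery is never extended to marked points at all. Saddle connections are split into standard (zero--zero), special (marked point--zero) and simple (marked point--marked point); the original complexes on the unmarked surface occupy the top levels of the hierarchy, and $k(k+1)$ new bottom levels are added, one for each marked point and each pair of marked points, whose ``complexes'' are single special or simple saddle connections. The separation statement (your Step~2) at these new levels is not proved by combining two such saddle connections into a higher-level complex; instead, two combinable special (resp.\ simple) saddle connections through the same marked point are concatenated---or force a nearby zero---to produce a short \emph{standard} saddle connection pointing into an interval already blocked at the standard levels, which is the content of the two corollaries on special and simple saddle connections in the appendix; Steps~1 and~3 are then vacuous at those levels because the complexes are single segments. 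So either supply the marked-point versions of the Section~2 lemmas listed above (well-defined level, the parallel-decomposition form of Lemma~\ref{lem:sigma}, etc.), or adopt the paper's reorganization, which was designed to sidestep exactly these issues.
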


In fact with a similar proof we have the following 
\begin{thm}\label{thm:rotinvar} Let $P$ be a  rotation invariant subset of the stratum of quadratic differentials with $k$ marked points where a metric given by local coordinates is well defined. Assume $P$ has compact closure in the stratum.  The set $E\subset P$ consisting of those quadratic differentials $q$ such that the Teichm\"uller geodesic defined by $q$ stays in a compact set in the stratum  is $\alpha$  winning for Schmidt's game.   In fact it is $\alpha$-strong winning. 
\end{thm}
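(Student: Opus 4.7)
The plan is to adapt the proof of Theorem \ref{thm:fullspace} directly. The key observation is that the strategy developed there depends on the local product structure $U \simeq Z \times S^1$ induced by the $S^1$-action on the stratum, and in particular on Alice being able to move the angular factor $I_j'$ of her ball $A_j = Z_j' \times I_j'$ anywhere inside $I_j$ while pinning the $Z$-coordinate to some chosen $q$. Rotation invariance of $P$ is exactly what guarantees that such a move keeps $A_j$ inside $P$: locally we still have $P \cap U \simeq (P \cap Z) \times S^1$, so if $q \in P \cap Z_j$ then every angular rotation $e^{i\phi} q$ also lies in $P$.

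Concretely, I would first set up the game inside a small neighborhood of some base $q_0 \in P$ where a single triangulation $\Gamma$ remains defined and where the perturbation estimates hold uniformly. Since $\bar P$ is compact we obtain a uniform lower bound $L_0$ on the systole and a uniform Lipschitz constant $\lambda_2$ for Lemma \ref{lem:comparable}, so the sequence of constants $c_1 < \dots < c_{M+1}$ and the parameters $\alpha, \rho_1, \rho_2$ appearing in (\ref{eq:alpha}) and (\ref{def:c_i:2}) can be chosen exactly as in the proof of Theorem \ref{thm:fullspace}.

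Next, the inductive statement $(P_j)$ is formulated for $\beta c_i$-shrinkable complexes on surfaces $q$ ranging over $B_j \subset P$, and Steps 1--3 (the boundary/length ratio bound, non-combinability of complexes on surfaces with $d_U(q_1,q_2) \leq \alpha |B_j|$, and angular clustering of non-combinable complexes) transfer verbatim: every argument is intrinsic to the flat geometry of the underlying surfaces and to the complex-moving construction of Theorem \ref{thm:move}, both of which are unaffected by restricting attention to $P$. In Step 4, Alice picks some $(K, q) \in \Omega_i(j)$ with $q \in B_j \cap P$ and plays the ball $A_j$ of diameter $\alpha|B_j|$ centered at $(q, \phi)$ with $\phi \in I_j$ maximally far from $\theta(K)$; this ball lies in $P$ by rotation invariance, and the blocking analysis of the proof of Theorem \ref{thm:fullspace} then delivers $(P_{j+M})$.

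There is no genuinely hard step — the proposition is essentially a translation of Theorem \ref{thm:fullspace} to the rotation-invariant setting — but the point requiring care is confirming that Alice's chosen ball $A_j$ actually lies in $P$, which reduces to the single fact that $\{e^{i\phi} q : \phi \in S^1\} \subset P$ for every $q \in P$. With this observation, the strong winning conclusion (and the fact that the set is not absolute winning, by Bob choosing quadratic differentials with a persistent short vertical saddle connection as in the proof of Theorem \ref{thm:fullspace}) both follow from the same arguments.
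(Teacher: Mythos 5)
Your handling of the new hypothesis is exactly the paper's: the only thing Theorem~\ref{thm:rotinvar} adds to the situation of Theorem~\ref{thm:fullspace}/Theorem~\ref{marked strat} is that Bob's balls live in $P$ rather than in an open ball of the stratum, and the single place where this matters is Alice's move in Step 4, where she centers $A_j$ at a point whose $Z$-coordinate is that of some $q\in\Omega_i(j)$ and whose angular coordinate is pushed away from $\theta(K)$; rotation invariance of $P$ (so that locally $P\cap U\simeq \pi_Z(P\cap U)\times S^1$) is precisely what makes this a legal ball in $P$, while Steps 1--3 only use perturbation estimates valid for any pair of points of the ambient ball and the uniform systole/Lipschitz constants coming from compactness of $\bar P$. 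That is the argument the authors intend when they say ``with a similar proof.''

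There is, however, a genuine gap: the theorem is stated for strata \emph{with $k$ marked points}, and you run the proof of Theorem~\ref{thm:fullspace} verbatim, with complexes built only from ordinary saddle connections between zeroes. Boundedness in the marked stratum also forbids degeneration of saddle connections with an endpoint at a marked point, so your argument only yields the weaker conclusion that the geodesic stays in a compact set of the \emph{unmarked} stratum. To get the stated theorem one has to enlarge the machinery as in the paper's treatment of Theorem~\ref{marked strat}: introduce special saddle connections (marked point to zero) and simple ones (marked point to marked point), reindex the levels so that these occupy the lowest $k(k+1)$ levels with $M'=M+k(k+1)$, and replace the combination argument of Step 2 at those levels. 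That replacement is not ``verbatim'': Lemma~\ref{lem:combine} does not apply to a marked point, and instead one argues (as in Corollaries~\ref{spec sc control} and \ref{simp sc control}) that two short, nearly parallel, non-identical saddle connections through the same marked point force a short \emph{standard} saddle connection in a nearby direction, which has already been blocked at an earlier stage of the induction; Steps 1 and 3 become trivial for these levels since the complexes are single saddle connections. Your rotation-invariance observation then applies to this extended strategy exactly as you describe, but without the marked-point levels the proof as proposed does not establish the statement.
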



\subsection{Proof of Theorem~\ref{thm:pmf}}\label{sec:pmf}
Again as before the Diophantine foliations are not absolute winning since 
for any closed curve $\gamma$ the set of foliations $F$ such that 
$i(F,\gamma)=0$ is a codimension one subset. 

We now show strong winning.  We can assume we start with a fixed train track $\tau$, 
and a small ball $B(F_0,r)$ of foliations carried by $\tau$.  Indeed, if the initial ball 
Bob chooses contains points on the boundary of two or more charts, then Alice can 
use the strategy of choosing her balls furthest away from these boundary points, 
so that in a finite number of steps her choice will be contained in a single chart.  

By choosing transverse foliations, we can insure that there a ball $B'(q_0,r')\subset Q^1(1,\ldots, 1,-)$ of quadratic differentials contained in the principal stratum so that  
\begin{itemize}
\item the vertical foliation of each $q\in B'(q_0,r')$ is in $B(F_0,r)$. 
\item each vertical foliation in $B(F_0,r)$ is the vertical foliation of  some $q\in B(q_0,r')$.  
\item  $B(F_0,r)$ and $B'(q_0,r')$  are  small enough so that the holonomies of a fixed set of saddle connections serve as local coordinates.
\item  There is a fixed constant so that the holonomy   of any $q\in B'(q_0,r')$ is  bounded away from $0$ by that constant.
\end{itemize}
  In holonomy coordinates the map that sends $q\in B'(q_0,r')$ to its vertical foliation is just projection onto the horizontal coordinates. This map clearly satisfies the hypotheses of Theorem~\ref{abstract thm}.  Since the bounded geodesics form a strong winning set in $Q^1(1,\ldots, 1,-)$ by Theorem~\ref{thm:fullspace} they are strong winning in $\PMF$.

\subsection{Proof of Theorem~\ref{thm:iet}}

If the 
condition $\inf_n n|T^n(p_1)-p_2|>0$ holds for any pair of discontinuities $p_1,p_2$ of $T$, we say $T$ is {\em badly approximable}. 
The following lemma connects the badly approximated condition for interval exchanges with the bounded condition for geodesics. 

\begin{lemma}\label{ba is ba}(Boshernitzan \cite[Pages 748-750]{dukeJ}) $T$ is badly approximable if and  only if  the Teichm\"uller geodesic corresponding to vertical direction is bounded for any  zippered rectangle such that  $T$ arises as the first return of the vertical flow to a transversal. 
\end{lemma}
See in particular the first equation on page 750, which relates the size of smallest interval bounded by discontinuities of $T^n$ and closeness to a saddle connection direction. That is, let $T$ be an IET that arises from first return to a transversal of a flow on a flat surface $q$. Assume that the smallest interval of continuity of $T^n$  is less than $\frac{\epsilon}{n}$ 
then $$ |\theta(v)-\theta|<\frac{C\epsilon}{nL(v)},$$ where $v$ is a saddle connection on $q$ with length $O(n)$.
Recall 
Theorem \ref{thm:equiv} relates closeness to saddle connection directions to boundedness of the Teichm\"uller geodesics.

We now give the proof  of Theorem~\ref{thm:iet}.
For the same reason as above the set of bounded interval exchanges is not absolute winning.  
For strong winning, the proof is identical to the one for $\PMF$ except  that now, using for example the  the zippered rectangle construction,  we can assume we have a small ball $B$ in the space of interval exchange transformations \cite{gauss}, a corresponding ball in some stratum $B'\subset Q^1(k_1,\ldots, k_n,+)$, such that each  interval exchange transformation in $B$ arises from the first return to  a horizontal transversal of some $\omega\in B'$, and conversely for each $\omega\in B'$, the first return to a horizontal transversal gives rise to a point in $B$.  We can assume these transversals vary continuously.  Again the map from holonomy coordinates in $B'$ to lengths in $B$ is given by projection onto horizontal coordinates.  We now apply Lemma~\ref{ba is ba}, 
 Theorem \ref{thm:fullspace} and Theorem~\ref{abstract thm}.

If we mark points $a,b$ in the interval then we have  

\begin{thm}\label{marked} Given any irreducible permutation $\pi$ there exists $\alpha>0$ such that for any pair of points $(a,b)$ we have
$$\{T=T_{L,\pi}: \inf_{n>0}\{nd(T^na,b)\}>0\}$$
 is an $\alpha$-strong winning set. 
\end{thm}

\begin{proof}   As in the last theorem we find a ball in the stratum such that first return to transversals give the interval exchange. Now mark the points along each transversal at distances $a,b$ to obtain a  set  $B'$ of marked translation surfaces. It is not a ball but it is invariant under rotations lying in a small interval about the identity. 
Then by Theorem~\ref{thm:rotinvar} the set of bounded trajectories in it is strong winning. 
 By Theorem \ref{abstract thm} the image of this set is strong winning in the space of marked interval exchange transformations. 
\end{proof}


\subsection{Proof of Theorem~\ref{thm:riemann}}
Let $U$ be the intersection of the principle stratum with $Q^1(X)$.  Since the complement of $U$ is contained in a finite union of smooth submanifolds, then for any sufficiently small $\alpha>0$ and for any sufficiently small ball chosen by Bob, Alice can respond with a ball contained entirely in $U$ with the  bounded away from zero.  Thus, we may assume Bob's initial ball $B_1$ is contained in $U$.  By the main theorem of \cite{HM}, the homeomorphism from $Q^1(X)\to\PMF$ sending a quadratic differential to the projective class of its vertical foliation is smooth when restricted to $U$.  We can now apply Theorem~\ref{thm:pmf}.

\end{document}